	\providecommand{\corollaryname}{Corollary}
	\providecommand{\definitionname}{Definition}
	\providecommand{\examplename}{Example}
	\providecommand{\lemmaname}{Lemma}
	\providecommand{\propositionname}{Proposition}
	\providecommand{\remarkname}{Remark}
	\providecommand{\theoremname}{Theorem}
	\theoremstyle{plain}
		\newtheorem{thm}{\protect\theoremname}[section] 
		\newtheorem{prop}[thm]{\protect\propositionname}
		\newtheorem{lem}[thm]{\protect\lemmaname}
		\newtheorem{cor}[thm]{\protect\corollaryname}
	\theoremstyle{definition}
		\newtheorem{defn}[thm]{\protect\definitionname}
	\theoremstyle{remark}
	\numberwithin{figure}{section}
	\numberwithin{equation}{section}
	\newenvironment{acknowledgements}{
		\begin{abstract}} {\end{abstract}}
     \newcommand{\fk}[1]{\mathbf{#1}}
		\newcommand{\Hom}{\operatorname{Hom}\nolimits}
            \newcommand{\Func}{\operatorname{Func}\nolimits}
		\newcommand{\rMod}[1]{\operatorname{\mathbf{Mod}\hspace{0.3mm}\text{--}\hspace{-0.7mm}}\nolimits{#1}}
		\newcommand{\lMod}[1]{{#1}\operatorname{\hspace{-0.7mm}\text{--}\hspace{0.3mm}\mathbf{Mod}}\nolimits}
 \newcommand{\myid}{\mathbf{1}}
\title{Corner replacement for Morita contexts}
\author{Raphael Bennett-Tennenhaus}
\date{}
\begin{document}
\subjclass[2020]{Primary 16D90}
\keywords{Morita context, Morita equivalence, generalised matrix ring}
\maketitle

\begin{abstract}
We consider how Morita equivalences are compatible with the notion of a corner subring. 
Namely, we outline a canonical way to replace a corner subring of a given ring with one which is Morita equivalent, and look at how such an equivalence ascends. 

We use the language of Morita contexts, and then specify these more general results. 
% Since the generality allows we work in the setting of rings with local units and unital bimodules, and relate our main result to categories of functors. 
We give applications to trivial extensions of finite-dimensional algebras, tensor rings of pro-species,  semilinear clannish algebras arising from orbifolds, and functor categories. 
\end{abstract}

\section{Introduction}

Celebrated results of Morita from \cite[\S3]{Morita-duality-modules-rings-with-min-condtions} characterise the existence of an equivalence $\lMod{R}\simeq \lMod{S}$ between module categories in terms of the existence of a pair ${}_{R}N_{S},{}_{S}L_{R}$ of bimodules subject to compatibility conditions.  
These conditions include the existence of bimodule homomorphisms of the form $\theta\colon N\otimes L\to R$ and  $\zeta\colon L\otimes N\to S$ satisfying certain associativity conditions. 
A \emph{Morita context}, also known as \emph{pre}-\emph{equivalence data}, refers to the tuple $(R,S;\,N,L;\,\theta,\zeta)$. 
We note, in particular, that such a context defines an equivalence of the form above when the maps $\theta,\zeta$ are surjective; see for example \Cref{thm-recalled-from-Anh-Marki}. 
The results of Morita are presented in an illustrious book by Bass \cite[Chapters II, III]{Bass-algebraic-K-theory}.
% , who also produced dedicated notes to this exposition; see \cite{bass1969morita}. 

Morita contexts have since been studied in various other generalities. In particular, so-called \emph{Morita theory} has been developed by  \'{A}nh and M\'{a}rki \cite{Marki-Anh-Morita-local-units-1987} to unital modules over rings with local units, and more generally, to categories of modules over idempotent rings in work of Garc\'{\i}a and Sim\'{o}n \cite{Garcia-Simon-Morita-equivalence-idempotent-rings}. An idea we exploit here is that any Morita context as above gives rise to a \emph{ring of Morita context}, a ring of matrices denoted $\begin{bsmallmatrix}
R & N
\\
L & S
\end{bsmallmatrix}$ whose ring structure is canonically defined by information from the context. 
% Such a ring consists of: elements of the form $\left(\begin{smallmatrix} r & n\\ l & s
% \end{smallmatrix}\right)$ with $r\in R$, $n\in N$, $l\in L$ and $s\in S$; addition is component-wise given by the additive group structures of $R$, $S$, $N$ and $L$; and multiplication is implicitly given by matrix multiplication with respect to the canonical module actions on $R$, $S$, $N$ and $L$ and the maps $\theta$ and $\zeta$. 

The general purpose of this note is to record the statement and a proof of Proposition \ref{prop-induced-contexts-for-calssical-matrix-rings} below. 
The author suspects Proposition \ref{prop-induced-contexts-for-calssical-matrix-rings} was already well-known `folklore'. 

\begin{prop}
\label{prop-induced-contexts-for-calssical-matrix-rings}
Let $R$ and $S$ be unital associative rings and let $e\in R$ be an idempotent. 

If ${}_{eRe}M_{S}$ and ${}_{S}N_{eRe}$ are bimodules defining a Morita equivalence $eRe\sim S$, then 
\[
\begin{array}{cc}
\begin{pmatrix}
(1-e)R(1-e) & (1-e)Re\otimes M
\\
eR(1-e) & M
\end{pmatrix},
&
\begin{pmatrix}
(1-e)R(1-e) & (1-e)Re
\\
N\otimes eR(1-e) & N
\end{pmatrix}
\end{array}
\]
are bimodules defining a Morita equivalence $R\sim    \begin{bsmallmatrix}
(1-e)R(1-e) & (1-e)Re\otimes M
\\
N\otimes eR(1-e) & S
\end{bsmallmatrix}$. 
\end{prop}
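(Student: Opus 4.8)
The plan is to produce an explicit Morita context $(R,T;P,Q;\theta,\zeta)$ with surjective structure maps, where $T$ denotes the generalised matrix ring displayed in the statement, and then to quote \Cref{thm-recalled-from-Anh-Marki} to conclude that $R\sim T$. Write $f=1-e$, so that the Peirce decomposition of $R$ with respect to $e$ and $f$ identifies $R$ with the generalised matrix ring $\begin{bsmallmatrix} fRf & fRe \\ eRf & eRe\end{bsmallmatrix}$, the ring structure being matrix multiplication carried out inside $R$. The hypothesis supplies a Morita context $(eRe,S;M,N;\mu,\nu)$ whose maps $\mu\colon M\otimes_S N\to eRe$ and $\nu\colon N\otimes_{eRe}M\to S$ are surjective. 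Denoting the two displayed matrices by $P$ (the one with $M$ in the lower-right corner) and $Q$ (the one with $N$), the first step is to check, block by block, that $P$ is an $(R,T)$-bimodule and $Q$ a $(T,R)$-bimodule, both actions being matrix multiplication built from the multiplication of $R$ together with the bimodule structures of $M$ and $N$.

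Next I would compute the two tensor products that carry the context. Since $T$ has diagonal idempotents $\varepsilon_1,\varepsilon_2$ summing to $1$ with $\varepsilon_1 T\varepsilon_1=fRf$ and $\varepsilon_2 T\varepsilon_2=S$, the isomorphism $P\otimes_T T\varepsilon_j\cong P\varepsilon_j$ reduces $P\otimes_T Q$ to the blockwise matrix product whose $(i,k)$ entry is $\bigoplus_j P_{ij}\otimes_{T_{jj}}Q_{jk}$, tensoring only over the corners $fRf$ and $S$; the analogous reduction over $fRf$ and $eRe$ applies to $Q\otimes_R P$. I would then define $\theta\colon P\otimes_T Q\to R$ and $\zeta\colon Q\otimes_R P\to T$ by matrix multiplication, contracting every factor $M\otimes_S N$ by $\mu$ into $eRe$ and every factor $N\otimes_{eRe}M$ by $\nu$ into $S$, and using the internal multiplication of $R$, respectively $T$, for all remaining products.

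It then remains to verify surjectivity and the two associativity axioms. Surjectivity of $\theta$ onto the $fRf$-block and the two off-diagonal blocks of $R$ is immediate, since $f$ (the identity of $fRf$) occurs as an entry of both $P$ and $Q$, so the relevant products recover $fRf$, $fRe$ and $eRf$; surjectivity onto the $eRe$-block is exactly surjectivity of $\mu$. Dually, $\zeta$ is onto the $S$-corner because $\nu$ is surjective, the other blocks being covered by $f$ as before. The compatibility identities $\theta(p\otimes q)\,p'=p\,\zeta(q\otimes p')$ and $\zeta(q\otimes p)\,q'=q\,\theta(p\otimes q')$ expand, block by block, into associativity of matrix multiplication in $R$ and $T$ together with the Morita-context identities $\mu(m\otimes n)\,m'=m\,\nu(n\otimes m')$ and $\nu(n\otimes m)\,n'=n\,\mu(m\otimes n')$ for the original context; granting these, \Cref{thm-recalled-from-Anh-Marki} yields the equivalence.

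The hard part will be the bookkeeping rather than any single idea: correctly identifying $P\otimes_T Q$ and $Q\otimes_R P$ blockwise, checking that $\theta$ and $\zeta$ are well defined (balanced over $T$, respectively $R$) and are bimodule maps, and confirming that the two contraction maps $\mu$ and $\nu$ interlock with ordinary matrix associativity. This last point is precisely where the axioms of the context $(eRe,S;M,N;\mu,\nu)$ are genuinely used, and a slip in any one block would destroy either the balancing or the bimodule-map property.
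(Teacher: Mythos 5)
Your proposal is correct, and at the level of mathematical content it is the same construction the paper uses: your $P$ and $Q$ are exactly the column-excision and row-excision bimodules of \S\ref{sec-excision} (for $n=t=2$, $A_1=(1-e)R(1-e)$, $A_2=eRe$), your $\theta$ and $\zeta$ are the column-row and row-column ligations of \S\ref{sec-ligation}, and the closing appeal to \Cref{thm-recalled-from-Anh-Marki} is the same. The difference is one of packaging: the paper's proof of the Proposition is a one-line specialisation of \Cref{cor-main-theorem-about-contexts-restricts-to-equivalences}, which is itself proved by carrying out your blockwise bookkeeping for an arbitrary $n\times n$ generalised Morita context over rings with local units (Lemmas \ref{lem-column-excision-bimodule-structure}--\ref{lem-mixed-associativity-conditions-between-excised-matrix-bimodules}); you inline that argument for the $2\times 2$ unital case, which buys a self-contained and shorter proof of the Proposition at the cost of the generality the paper needs for its other applications. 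One caveat: your claim that $P\otimes_T Q$ \emph{is} the blockwise product with $(i,k)$ entry $\bigoplus_j P_{ij}\otimes_{T_{jj}}Q_{jk}$ is not literally correct --- that direct sum only maps onto $P\otimes_T Q$, since the tensor product over $T$ also imposes relations coming from the off-diagonal entries $fRe\otimes M$ and $N\otimes eRf$ of $T$. This does not damage the proof, because you only ever need a well-defined map \emph{out of} $P\otimes_T Q$, and the $T$-balancedness you list among the remaining checks is precisely what makes your blockwise formula descend; but you should phrase that step as defining a balanced biadditive map on $P\times Q$ rather than as a computation of the tensor product.
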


% Proposition \ref{prop-induced-contexts-for-calssical-matrix-rings} conveys the main idea discussed in this article. 
Note that the rings $R$ and $S$ are unital, however, in \Cref{thm-induced-contexts-for-generalised-matrix-rings}, we work in the setting of Morita contexts for (possibly non-unital) rings with local units, and unital bimodules over them. 
Working in this more general setting extends the reach of potential applications. 
% Furthermore, \Cref{thm-induced-contexts-for-generalised-matrix-rings} is written in the generality of Morita contexts. 
% We then specialise \Cref{thm-induced-contexts-for-generalised-matrix-rings} to contexts defining Morita equivalences in order to obtain Proposition \ref{prop-induced-contexts-for-calssical-matrix-rings}; see \Cref{cor-main-theorem-about-contexts-restricts-to-equivalences} and the proof thereafter.
% SECTION 2 IS BACKGROUND
% SECTION 3.1 DEFINES NEW BIMODULES FROM OLD
% SECTION 3.2 DEFINES NEW BILINEAR FORMS FROM OLD
% SECTION 3.3 VERIFIES COMPATABILITY CONDITIONS
% In \S\ref{sec-examples} we consider future directions one can explore using the generality of the context we are working in.  
\section{Background on Morita contexts}

\subsection{Rings with local units} 
\cite[Definition 1]{Marki-Anh-Morita-local-units-1987} 
By a \emph{ring} we will mean one which is associative, but non-unital, meaning we do not require it to have a multiplicative identity. 

If $R$ is a ring we write $\mu_{R}\colon R\otimes_{R} R\to R$ for the universal $R$-$R$-bimodule homomorphism given by the (balanced) multiplication map $R\times R\to R$. 
If $M$ is a left module over $R$ we likewise write $\lambda^{M}_{R}\colon R\otimes_{R}M\to M$ for the universal left $R$-module homomorphism given by the left action map $R\times M\to M$. 
Similarly define $\rho^{S}_{M}\colon M\otimes_{S}S\to M$ if $M$ is a right $S$-module for some ring $S$, meaning that $M$ is an $R\text{-}S$ bimodule if and only if one has $\lambda^{R}_{M} (\myid_{R}\otimes \rho^{S}_{M})=\rho^{S}_{M} (\lambda_{M}^{R}\otimes \myid_{S})$ for a pair of such actions. 

A ring $R$ is said to  \emph{have local units} if every finite subset of $R$ is contained in a (unital) subring of the form $eRe$ for some $e=e^{2}\in R$. 
% Such rings are therefore \emph{idempotent}, meaning that  $\mu_{R}=\lambda^{R}_{R}=\rho^{R}_{R}$ is surjective. 
For a left module $M$ over a  ring $R$ with local units, we say $M$ is \emph{unital} if $\lambda_{M}^{R}$ is surjective. 
Similarly if $S$ has local units then we call a right $S$-module $M$ unital if $\rho_{M}^{S}$ is surjective, and we call a bimodule unital if it is both unital as a left module and as a right module.

\subsection{Matrix notation}

In what remains in the article we use the following notation. 

Let $(X_{ij}\mid m,n)$ be a  collection of additive groups $X_{ij}$ for each $i=1,\dots,m$ and each $j=1,\dots,n$. 
% Each $X_{ij}$ may come equipped with additional structural maps, such as a ring multiplication, a bi/module action and/or a balanced map. 
Throughout we consider the following group under component-wise addition,
\[
\begin{array}{c}
\begin{pmatrix}
X_{11} & \cdots & X_{1m} \\
\vdots & \ddots & \vdots \\
X_{l1} & \cdots & X_{lm}
\end{pmatrix}
=
\left\{
\begin{pmatrix}
x_{11} & \cdots & x_{1m} \\
\vdots & \ddots & \vdots \\
x_{l1} & \cdots & x_{lm}
\end{pmatrix}
\Bigg\vert\,
x_{ij}\in X_{ij} \text{ for each }i\text{ and }j
\right\},
\end{array}
\]
which is isomorphic to the direct sum of the $X_{ij}$. 

Consider now collections $(X_{ik}\mid l,m)$, $(Y_{kj}\mid m,n)$ and $(Z_{ij}\mid l,n)$ of additive groups. 
Suppose additionally there are rings $A_{1},\dots,A_{l}$ such that each $X_{ik}$ is a right $A_{k}$-module
 and each $Y_{kj}$ is a left $A_{k}$-module. 
 Furthermore, suppose there is a collection of group homomorphisms $\alpha_{ikj}\colon X_{ik} \otimes_{A_{k}} Y_{kj} \to Z_{ij}$ uniquely defined by $A_{k}$-balanced maps of the form $ X_{ik} \times Y_{kj} \to Z_{ij}$. 
% \[
% \begin{tikzcd}[column sep=0.6cm, row sep=0.01cm]
%    W\times W' \ar[bend left=20,dr, "\alpha"] & &  W\times X' \ar[bend left=20,dr, "\gamma"] & & Y\times W' \ar[bend left=20,dr, "\kappa"] & &  Y\times X'\ar[bend left=20,dr, "\mu"] & &  
%    \\
%    & W'' &   & X'' & & Y'' &  & Z'' & 
%    \\
%    X\times Y'  \ar[bend right=18,swap, ur, "\beta"] & &  X\times Z' \ar[bend right=18,swap, ur, "\delta"] & & Z\times Y'\ar[bend right=18,swap, ur, "\lambda"] & & Z\times Z' \ar[bend right=18,swap, ur, "\eta"] & & 
% \end{tikzcd}
% \]
Combining the functions $\alpha_{ikj}$ we use the additive structure of the groups involved to consider a biadditive map $\left\vert
\alpha_{ikj}
\right\vert$,  denoted in detail by
\[
\begin{array}{c}
\begin{vmatrix}
 \alpha_{1k1} & \cdots & \alpha_{1km} \\
\vdots & \ddots & \vdots \\
\alpha_{lk1} & \cdots & \alpha_{lkm}
\end{vmatrix}
\colon
\begin{pmatrix}
X_{11} & \cdots & X_{1m} \\
\vdots & \ddots & \vdots \\
X_{l1} & \cdots & X_{lm}
\end{pmatrix}
\times 
\begin{pmatrix}
Y_{11} & \cdots & Y_{1n} \\
\vdots & \ddots & \vdots \\
Y_{m1} & \cdots & Y_{mn}
\end{pmatrix}
\to 
\begin{pmatrix}
Z_{11} & \cdots & Z_{1n} \\
\vdots & \ddots & \vdots \\
Z_{l1} & \cdots & Z_{ln}
\end{pmatrix}
\end{array}
\]
which is defined by the assignment $((x_{ik}),(y_{kj}))\mapsto (z_{ij})$ where $z_{ij}=\sum_{k}\alpha_{ikj}(x_{ik}\otimes y_{kj})$.

% Note that if each of the functions $\alpha_{ikj}$ is biadditive, then so too is $\left\vert
% \alpha_{ijk}
% \right\vert$.
% \[
% \begin{array}{c}
% \left(
% \begin{pmatrix}
% w & x \\
% y & z
% \end{pmatrix}
% ,\,
% \begin{pmatrix}
% w' & x' \\
% y' & z'
% \end{pmatrix}
% \right)
% \mapsto 
% \begin{pmatrix}
% \alpha(w,w')+\beta(x,y') & \gamma(w,x')+\delta(x,z') \\
% \kappa(y,w')+\lambda(z,y') & \mu(y,x')+\mu(z,z')
% \end{pmatrix}.
% \end{array}
% \]
In this notation we can consider some familiar situations.  
\begin{itemize}[--]
    \item In \S\ref{subsec-morita-context-ring}  we take $l=m=n$ and $X_{ij}=Y_{ij}=Z_{ij}$ where each $X_{ii}$ is a ring so that  $X_{ij}$ is an $X_{ii}$-$X_{jj}$-bimodule, such that the map $\left\vert
    \alpha_{ikj}
    \right\vert$ defines a ring multiplication.
    \item In \S\ref{subsec-morita-context-modules} we take $l=m$, $n=1$,  $Y_{ik}=Z_{ik}$ so that $\left\vert
    \alpha_{ikj}
    \right\vert$ defines a left module action over a ring from the item above. 
    Dually one can define right modules. 
    \item In \S\ref{subsec-morita-context-homomorphisms-of-modules}, \S\ref{subsec-morita-context-balanced-maps-of-modules}, \S\ref{subsec-morita-context-bimodules} and \S\ref{subsec-morita-context-associativty-of-balanced-maps} we similarly use the notation above to  consider module homomorphisms, balanced maps between modules, bimodules and associativiy conditions, respectively. 
    This sets up notation for the remainder of the article.
\end{itemize}

\subsection{Matrix ring of a generalised Morita context}
\label{subsec-morita-context-ring} 
By a \emph{generalised Morita context} we will mean a triple $(A_{i};\,M_{ij};\,\varphi_{ikj})$ where the following conditions hold.
\begin{itemize}
    \item $A_{i}$ ($i=1,\dots,n$) are associative rings. 
    \item $M_{ij}$ ($i,j=1,\dots,n$) are  $A_{i}$-$A_{j}$-bimodules, where $M_{ii}=A_{i}$. 
    \item $\varphi_{ikj}\colon M_{ik}\otimes_{A_{k}} M_{kj}\to M_{ij}$ ($i,j,k=1,\dots,n$) are $A_{i}$-$A_{j}$-bimodule maps, where
    \[
    \begin{array}{ccc}
\varphi_{kkj}=\lambda^{A_{k}}_{M_{kj}},
 &
 \varphi_{kkk}=\mu_{A_{k}},
 &
 \varphi_{ikk}=\rho^{A_{k}}_{M_{ik}}.
    
    \end{array}
    \]
    \item $\varphi_{ihj}(\myid\otimes \varphi_{hkj})=\varphi_{ikj}(\varphi_{ihk}\otimes\myid)$ ($h,i,j,k=1,\dots, n$) as maps $M_{ih}\otimes M_{hk}\otimes M_{kj}\to M_{ij}$.
\end{itemize}
The equations in the last item above are referred to as the \emph{mixed associativity conditions}. 
Consequently, as in \S\ref{subsec-morita-context-ring}, with respect to $(A_{i};\,M_{ij};\,\varphi_{ikj})$ there is a \emph{generalised matrix ring}
\[
[A_{i}, M_{ij}]
=
\begin{bmatrix}
A_{1} & \cdots & M_{1n} \\
\vdots & \ddots & \vdots \\
M_{n1} & \cdots & A_{n}
\end{bmatrix}
\text{ with multiplication map }
\left\vert\varphi_{ikj}\right\vert=
\begin{vmatrix}
 \varphi_{1k1} & \cdots & \varphi_{1kn} \\
\vdots & \ddots & \vdots \\
\varphi_{nk1} & \cdots & \varphi_{nkn}
\end{vmatrix}.
\] 
Note that if each ring $A_{i}$ has local units and each bimodule $M_{ij}$ is unital, then it is straightforward to see that $[A_{i}, M_{ij}]$ has local units. 
Morita contexts are usually considered as above, but for $n=2$; see for example \cite[p. 41]{Garcia-Simon-Morita-equivalence-idempotent-rings}. 
The following result of \'{A}nh and M\'{a}rki \cite{Marki-Anh-Morita-local-units-1987} is part of a Morita theory  developed for rings with local units.
\begin{thm}
\label{thm-recalled-from-Anh-Marki}
\cite[Theorem 2.2]{Marki-Anh-Morita-local-units-1987} Let $(R,S;\,N,L;\,\theta,\zeta)$ be a Morita context 
 where $R$ and $S$ have local units and ${}_{R}N_{S}$ and ${}_{S}L_{R}$ are unital, 
 and let $\lMod{R}$ and $\lMod{S}$ be the categories of unital left modules over $R$ and $S$ respectively.

Then $N\otimes_{S}-\colon \lMod{S}\to \lMod{R}$ and $L\otimes_{R}-\colon \lMod{R}\to \lMod{S}$ are mutually quasi-inverse equivalences if and only if the bimodule homomorphisms $\theta $ and $\zeta$ are surjective. 
\end{thm}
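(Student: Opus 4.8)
Since the statement is a biconditional, I would organise everything around the two natural transformations that any Morita context supplies, independently of surjectivity. Writing $F=N\otimes_S-\colon\lMod{S}\to\lMod{R}$ and $G=L\otimes_R-\colon\lMod{R}\to\lMod{S}$, associativity of the tensor product yields well-defined transformations $\epsilon\colon FG\Rightarrow\myid_{\lMod{R}}$ and $\delta\colon GF\Rightarrow\myid_{\lMod{S}}$, with components $\epsilon_Y(n\otimes l\otimes y)=\theta(n\otimes l)y$ and $\delta_X(l\otimes n\otimes x)=\zeta(l\otimes n)x$; they are natural because $\theta,\zeta$ are bimodule maps. The observation that links these to the data is that under the canonical identification $FG(R)\cong N\otimes_S L$ (valid as $R$ is a unital module over itself) the component $\epsilon_R$ is exactly $\theta$, and symmetrically $\delta_S$ is $\zeta$. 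The plan is therefore to show that $\epsilon,\delta$ are isomorphisms if and only if $\theta,\zeta$ are surjective, and that this is equivalent to $F,G$ being mutually quasi-inverse.

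For the implication from surjectivity, the engine is that a surjective pairing in a Morita context is automatically bijective. I would prove this for $\theta$ directly. Given $\xi=\sum_i n_i\otimes l_i\in\ker\theta$, unitality of ${}_{R}N$ together with the local units lets me choose an idempotent $e\in R$ fixing every $n_i$, so that $e\cdot\xi=\xi$, and surjectivity of $\theta$ lets me write $e=\sum_k\theta(n_k'\otimes l_k')$. Applying the two mixed associativity conditions in turn gives, for each $k$, the chain $\theta(n_k'\otimes l_k')\cdot\xi=\sum_i\theta(n_k'\otimes l_k')n_i\otimes l_i=\sum_i n_k'\otimes\zeta(l_k'\otimes n_i)l_i=n_k'\otimes l_k'\,\theta(\xi)=0$, so that $\xi=e\cdot\xi=\sum_k\theta(n_k'\otimes l_k')\cdot\xi=0$. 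The symmetric computation handles $\zeta$. Once $\theta,\zeta$ are bijective, each $\epsilon_Y$ factors as $\theta\otimes\myid_Y$ followed by the isomorphism $R\otimes_R Y\cong Y$ (which holds for unital $Y$ over a ring with local units), and is therefore an isomorphism; likewise for $\delta_X$. Hence $FG\cong\myid$ and $GF\cong\myid$, giving the equivalence, and conversely if $\epsilon,\delta$ are isomorphisms then $\theta=\epsilon_R$ and $\zeta=\delta_S$ are in particular surjective.

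It remains to see that an \emph{abstract} quasi-inverse forces surjectivity; this is where I would do the real work. Assuming $F,G$ mutually quasi-inverse, I would first recall that $F=N\otimes_S-$ has right adjoint $\Hom_R(N,-)$, and that a quasi-inverse of an equivalence is simultaneously a right adjoint, so $G=L\otimes_R-\cong\Hom_R(N,-)$ naturally. Evaluating this isomorphism at the regular module and tracing the adjunction identifies $L\xrightarrow{\sim}\Hom_R(N,R)$ with the map $l\mapsto\theta(-\otimes l)$. Since $F$ is an equivalence, $N\cong F(S)$ is a generator of $\lMod{R}$, so its trace ideal $\operatorname{tr}_R(N)=\sum_{\phi\in\Hom_R(N,R)}\operatorname{im}\phi$ acts as the identity on every unital module and hence exhausts $R$; combined with the identification of $L$ this gives $\operatorname{im}\theta=\operatorname{tr}_R(N)=R$, so $\theta$ is surjective, and symmetrically so is $\zeta$.

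I expect this converse to be the main obstacle, since the backward direction is essentially the self-contained computation above, whereas the forward direction must faithfully convert an abstract equivalence into the concrete bilinear data. The two delicate points I would isolate as preliminary lemmas are: first, that the quasi-inverse $G$ really coincides with $\Hom_R(N,-)$ and that the induced isomorphism $G(R)\cong\Hom_R(N,R)$ is precisely $l\mapsto\theta(-\otimes l)$; and second, that the generator and trace-ideal formalism survives the passage to rings with only local units, where there is no global identity and one must work throughout with unital modules and the identification $R\otimes_R-\cong\myid$. With these in place, the assembly of both implications is routine.
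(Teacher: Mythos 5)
The paper does not prove this statement at all --- it is quoted directly from \'{A}nh and M\'{a}rki \cite[Theorem 2.2]{Marki-Anh-Morita-local-units-1987} --- so there is no internal proof to measure your argument against, and I can only judge it on its own terms. Your forward direction is sound: the computation showing that a surjective pairing in a Morita context is automatically injective (choose a local unit $e$ fixing the finitely many left-hand tensor factors of an element of $\ker\theta$, write $e$ as a sum of values of $\theta$, and apply both mixed associativity conditions) is the standard argument and adapts correctly to rings with local units, as does the identification $R\otimes_{R}Y\cong Y$ for unital $Y$.

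The converse is where there is a genuine gap, and it is not merely technical. Your argument rests on the claim that the isomorphism $G(R)\cong\Hom_{R}(N,R)$ obtained from uniqueness of adjoints becomes, after identifying $G(R)=L\otimes_{R}R$ with $L$, precisely the canonical map $l\mapsto\theta(-\otimes l)$. Nothing forces this: the hypothesis that $F$ and $G$ are quasi-inverse through \emph{some} natural isomorphisms carries no information about $\theta$ or $\zeta$ whatsoever, so no argument from that hypothesis alone can conclude anything about them. Indeed the strengthened statement you set out to prove is false: take $R=S=k$ a field, $N=L=k$ and $\theta=\zeta=0$; the mixed associativity conditions hold trivially, both tensor functors are isomorphic to the identity and hence mutually quasi-inverse, yet neither pairing is surjective. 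The theorem must therefore be read, as in \'{A}nh--M\'{a}rki, as asserting that the \emph{canonical} transformations $\epsilon,\delta$ induced by $\theta,\zeta$ are isomorphisms; under that reading the ``only if'' direction is exactly the one-line observation you already make, namely that $\epsilon_{R}$ is $\theta$ up to the isomorphism $R\otimes_{R}R\cong R$. The whole third paragraph of your proposal (adjoint uniqueness, generators, trace ideals --- each of which would in any case need repair over rings with only local units, where the right adjoint of $N\otimes_{S}-$ on unital modules is the unital part of $\Hom_{R}(N,-)$ rather than $\Hom_{R}(N,-)$ itself) should be deleted rather than completed.
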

\subsection{Modules over matrix rings}
\label{subsec-morita-context-modules}
Let 
%$l=m$, $n=1$,  $X_{ik}=M_{ik}$ and for all $i,k=1,\dots,m$ where  
$(A_{i};\,M_{ij};\,\varphi_{ikj})$ be a  generalised Morita context in the sense of \S\ref{subsec-morita-context-ring}. 
By a \emph{column collection for} $(A_{i};\,M_{ij};\,\varphi_{ikj})$ we mean an $n$-tuple $P_{\fk{c}}=(P_{1},\dots,P_{n})$ of  left $A_{i}$-modules $P_{i}$ ($i=1,\dots,n)$ where the following conditions hold. 
% For each $i,k$  suppose (a) and (b) below hold.
    \begin{itemize}
        \item $\beta_{ik} \colon M_{ik}\otimes_{A_{k}} P_{k}\to P_{i}$ ($i,k=1,\dots,n$) are left $A_{i}$-module homomorphisms, where $\beta_{ii}=\lambda_{P_{i}}^{A_{i}}$ for each $i$, which define the biadditive map $\left\vert\beta_{ik}\right\vert$, denoted in detail by
        \[
\begin{array}{c}
\begin{vmatrix}
 \beta_{1k}  \\
\vdots \\
\beta_{nk}
\end{vmatrix}
\colon
\begin{bmatrix}
A_{1} & \cdots & M_{1n} \\
\vdots & \ddots & \vdots \\
M_{n1} & \cdots & A_{n}
\end{bmatrix}
\times 
\begin{pmatrix}
P_{1}\\
\vdots  \\
P_{n} 
\end{pmatrix}
\to 
\begin{pmatrix}
P_{1}\\
\vdots  \\
P_{n} 
\end{pmatrix}
\end{array}
\]
        \item $\beta_{ih}(\varphi_{ikh}\otimes \myid)=\beta_{ik}(\myid\otimes \beta_{kh})$ ($h=1,\dots, n$) as maps $M_{ik}\otimes M_{kh}\otimes P_{h}\to P_{i}$.
    \end{itemize}
        % \item $Y_{k1}=Z_{k1}$  as additive groups, which we relabel using the symbol $N_{k}$.
        % \item $\alpha_{ii1}\colon A_{i}\times N_{i}\to N_{i}$ gives an action making $N_{i}$ into a left $A_{i}$-module. 
    These conditions imply that the modules $(P_{1},\dots, P_{n})$ together with the (left) \emph{structure maps} $\beta_{ik}$ define an object in the category of (left) \emph{representations} of the given generalised Morita context; see \cite[\S 2, (2.1)]{Chen-Hongjin-derived-equivalences-2020}. 
    In other words, $P_{\fk{c}}$ together with the map $\left\vert\beta_{ik}\right\vert$ defines a left module over the generalised matrix ring $[A_{i}, M_{ij}]$. 
    
    By a result of Chen and Liu \cite[Proposition 2.1]{Chen-Hongjin-derived-equivalences-2020}, which generalises a result  of Green \cite[Theorem 1.5]{Green-rings-in-matrix-form}, if $A_{1},\dots,A_{n}$ are unital then the correspondence above defines a categorical  equivalence. 
    % between  the category of such representations and the category of left $[A_{i}, M_{ij}]$-modules. 
    In \cite[\S 2]{Chen-Hongjin-derived-equivalences-2020} these authors consider right $[A_{i}, M_{ij}]$-modules, constructed dually to the column collection $P_{\fk{c}}$, which we refer to as a \emph{row collection}.

\subsection{Module homomorphisms over matrix rings}
\label{subsec-morita-context-homomorphisms-of-modules} 
% \S\ref{subsec-morita-context-homomorphisms-of-modules} concerns morphisms of modules from \S\ref{subsec-morita-context-modules}. 
Let $(A_{i};\,M_{ij};\,\varphi_{ikj})$ be a generalised Morita context in the sense of \S\ref{subsec-morita-context-ring}. 
As in \S\ref{subsec-morita-context-modules}, let $P_{\fk{c}}=(P_{1},\dots,P_{n})$ and $P_{\fk{c}}'=(P_{1}',\dots,P_{n}')$ be column collections for $(A_{i};\,M_{ij};\,\varphi_{ikj})$, corresponding to left $[A_{i};\,M_{ij}]$-modules via left structure maps  $\beta_{ik} \colon M_{ik}\otimes_{A_{k}} P_{k}\to P_{i}$ and $\beta_{ik}' \colon M_{ik}\otimes_{A_{k}} P_{k}'\to P_{i}'$  respectively. 
By a \emph{column morphism} we mean a $n$-tuple $f_{\fk{c}}= (f_{1},\dots,f_{n})$ where the following conditions hold.
\begin{itemize}
    \item $f_{i}\colon N_{i}\to N_{i}'$ ($i=1,\dots, n$) are left $A_{i}$-module homomorphisms.
    \item $f_{i}\beta_{ij}=\beta_{ij}'(\myid\otimes f_{j})$ ($i,j=1,\dots, n$) as maps $M_{ij}\otimes P_{j}\to P_{i}'$.
\end{itemize}
% \begin{enumerate}[(a)]
%     \item $N_{h}$ and $N_{h}'$ are additive groups for each $h=1,\dots,m$. 
%     \item $\beta_{ik} \colon M_{ik}\otimes_{A_{k}} N_{k}\to N_{i}$ and $\beta_{ik}' \colon M_{ik}\otimes_{A_{k}} N_{k}'\to N_{i}'$ are left structure maps  giving rise to the column collections $N_{\bullet}=(N_{1},\dots N_{m})$  and $N_{\bullet}'=(N_{1}',\dots N_{m}')$, which have the structure of left $[A_{i};\,M_{ij}]$-modules as in \S\ref{subsec-morita-context-modules}.  
% \end{enumerate}
Then, as in  \S\ref{subsec-morita-context-modules}, by following dual arguments in \cite[\S 2]{Chen-Hongjin-derived-equivalences-2020} the tuple $(f_{1},\dots,f_{n})$ defines an $[A_{i};\,M_{ij}]$-module homomorphism $f_{\fk{c}}\colon P_{\fk{c}}\to P_{\fk{c}}'$ by $f_{\fk{c}}(p_{1},\dots,p_{n})=(f_{1}(p_{1}),\dots,f_{n}(p_{n}))$.  

Following \cite[\S 2]{Chen-Hongjin-derived-equivalences-2020} directly gives a dual construction of a right $[A_{i};\,M_{ij}]$-module homomorphism between row collections, which we refer to as a \emph{row morphism}. 
% \S\ref{subsec-morita-context-balanced-maps-of-modules} concerns balanced maps for modules from \S\ref{subsec-morita-context-modules} using Notation \ref{notn-matrix-notation-for-context}.
\subsection{Balanced homomorphisms over matrix rings}
\label{subsec-morita-context-balanced-maps-of-modules}
As in \S\ref{subsec-morita-context-ring}--\ref{subsec-morita-context-modules}, for  a generalised Morita context $(A_{i};\,M_{ij};\,\varphi_{ikj})$, fix a row collection $P_{\fk{r}}=(P_{1},\dots,P_{n})$ with respect to right structure maps $\beta_{ik}\colon P_{i}\otimes M_{ik}\to P_{k}$ and fix a column collection $Q_{\fk{c}}=(Q_{1},\dots,Q_{n})$  with respect to left 
 structure maps $\gamma_{kj}\colon M_{kj}\otimes Q_{j}\to Q_{k}$,  where the following conditions hold.  
 \begin{itemize}
     \item $Z$ is an additive group.
     \item $\alpha_{k}\colon P_{k} \otimes_{A_{k}} Q_{k} \to Z$ ($k=1,\dots,n$) are additive maps defining a biadditive map 
     \[
     \begin{array}{cc}
       \left\vert
\alpha_{k}
\right\vert\colon 
\begin{pmatrix}
P_{1} 
&
\cdots 
&
P_{n} 
\end{pmatrix}
\times 
\begin{pmatrix}
Q_{1}
\\
\vdots  
\\
Q_{n} 
\end{pmatrix}
\to Z   &  ((p_{1},\dots,p_{n}),(q_{1},\dots,q_{n}))\mapsto \sum_{k} \alpha_{k}(p_{k}\otimes q_{k})
     \end{array}
     \]
\item $\alpha_{j}(\myid\otimes \gamma_{ij})
=\alpha_{i}(\beta_{ij}\otimes \myid)$ ($i,j=1,\dots,n$) as maps $P_{i}\otimes M_{ij}\otimes Q_{j}\to Z$. 
 \end{itemize}
Then $\left\vert
\alpha_{k}
\right\vert$ is  $[A_{i};\,M_{ij}]$-balanced, so factors by an additive map $\left\vert
\underline{\alpha}_{k}
\right\vert\colon P_{\fk{r}}\otimes_{[A_{i};\,M_{ij}]}Q_{\fk{c}}\to Z$.
% satisfying $\left\vert
% \alpha_{knl}
% \right\vert=\left\vert
% \underline{\alpha}_{knl}
% \right\vert\circ \otimes$.
% \S\ref{subsec-morita-context-bimodules} concerns bimodules over rings from \S\ref{subsec-morita-context-ring} by combining the module actions from \S\ref{subsec-morita-context-modules} and its dual, using Notation \ref{notn-matrix-notation-for-context}.

\subsection{Matrix bimodules over matrix rings}
\label{subsec-morita-context-bimodules}
Let $(A_{i};\,M_{ij};\,\varphi_{ikj}),(A_{i}';\,M_{ij}';\,\varphi_{ikj}')$ be  generalised Morita contexts.
Fix additive groups $T_{ij}$ where the following holds. 
\begin{itemize}
    \item $T_{\fk{c}j}=(T_{1j},\dots , T_{nj})$ ($j=1,\dots,n$) are column collections for $(A_{i};\,M_{ij};\,\varphi_{ikj})$  with respect to left 
 structure maps $\gamma_{ikj}\colon M_{ik}\otimes T_{kj}\to T_{ij}$, summarised by 
    \[
\begin{array}{cc}
\begin{vmatrix}
 \gamma_{1kj}  \\
\vdots \\
\gamma_{nkj} 
\end{vmatrix}
\colon
\begin{bmatrix}
A_{1} & \cdots & M_{1n} \\
\vdots & \ddots & \vdots \\
M_{n1} & \cdots & A_{n}
\end{bmatrix}
\times 
\begin{pmatrix}
T_{1j}\\
\vdots  \\
T_{nj} 
\end{pmatrix}
\to 
\begin{pmatrix}
T_{1j}\\
\vdots  \\
T_{nj} 
\end{pmatrix}
, 
\\
\left((m_{ik}),(t_{kj})\right)
\mapsto 
\left(
\sum_{k}\gamma_{ikj}(m_{ik}\otimes t_{kj})
\right).
\end{array}
\]
 \item $T_{i\fk{r}}=(T_{i1},\dots , T_{in})$ ($i=1,\dots,n$) are row collections for $(A_{i}';\,M_{ij}';\,\varphi_{ikj}')$ with respect to right structure maps $\beta_{ikj}\colon T_{ik}\otimes M_{kj}'\to T_{ij}$, summarised by
 \[
\begin{array}{cc}
\begin{vmatrix}
 \beta_{ik1} & \cdots & \beta_{ikn} 
\end{vmatrix}
\colon
\begin{pmatrix}
T_{i1} & \cdots & T_{in}
\end{pmatrix}
\times 
\begin{bmatrix}
A_{1}' & \cdots & M_{1n}' \\
\vdots & \ddots & \vdots \\
M_{n1}' & \cdots & A_{n}'
\end{bmatrix}
\to 
\begin{pmatrix}
T_{i1} & \cdots & T_{in}
\end{pmatrix},
\\
((t_{ik}),(m_{kj}'))
\mapsto 
\left(
\sum_{k}\beta_{ikj}(t_{ik}\otimes m_{kj}')
\right).
\end{array}
\]
\item $\beta_{ikj}(\gamma_{ihk}\otimes \myid )=\gamma_{ihj}(\myid \otimes \beta_{hkj})$ ($h,i,j,k=1,\dots,n$) as maps $M_{ih}\otimes T_{hk} \otimes M_{kj}'\to T_{ij}$.
\end{itemize}
These conditions imply that the collection $(T_{ij})$ defines an  $[A_{i};\,M_{ij}]$-$[A_{i}';\,M_{ij}']$-bimodule, which we denote $T_{\fk{m}}$.  
Since $T_{\fk{m}}=\bigoplus_{i}T_{i\fk{r}}=\bigoplus_{j}T_{\fk{c}j}$  we refer to $T_{\fk{m}}$ as a \emph{matrix} \emph{bimodule}.

\subsection{Morita contexts for generalised matrix rings}
\label{subsec-morita-context-associativty-of-balanced-maps}
As above fix generalised Morita contexts $(A_{i};\,M_{ij};\,\varphi_{ikj})$ and $(A_{i}';\,M_{ij}';\,\varphi_{ikj}')$. 
Let $N_{\fk{m}}=(N_{ij})$ be an $[A_{i};\,M_{ij}]$-$[A_{i}';\,M_{ij}']$-matrix-bimodule as in \S\ref{subsec-morita-context-bimodules} with respect to left and right structure maps 
    \[
    \begin{array}{cc}
   \beta_{ilk}\colon M_{il}\otimes_{A_{l}}N_{lk}\to N_{ik}, 
   &
   \beta_{lkj}'\colon N_{lk}\otimes_{A_{k}} M_{kj}'\to N_{lj}.
    \end{array}
    \] 
    Likewise let $L_{\fk{m}}=(L_{ij})$ be a $[A_{i}';\,M_{ij}']$-$[A_{i};\,M_{ij}]$-matrix-bimodule with respect to 
    \[
    \begin{array}{cc}
 \gamma_{ilk}'\colon M_{il}'\otimes_{A_{l}}L_{lk}\to L_{ik},  &
   \gamma_{lkj}\colon L_{lk}\otimes_{A_{k}} M_{kj}\to L_{lj}.
    \end{array}
    \] 
     Suppose in addition the following conditions hold. 
\begin{itemize}
    \item $\alpha_{ikj}\colon N_{ik} \otimes L_{kj} \to M_{ij}$ ($i,j,k=1,\dots,n$)  are additive maps such that the map 
%     $\left\vert
% \alpha_{ikj}
% \right\vert$ between matrix bimodules of the form  
    \[
    \begin{array}{c}
\begin{vmatrix}
 \alpha_{1k1} & \cdots & \alpha_{1kn} \\
\vdots & \ddots & \vdots \\
\alpha_{nk1} & \cdots & \alpha_{nkn}
\end{vmatrix}
\colon
\begin{pmatrix}
N_{11} & \cdots & N_{1n} \\
\vdots & \ddots & \vdots \\
N_{n1} & \cdots & N_{nn}
\end{pmatrix}
\times 
\begin{pmatrix}
L_{11} & \cdots & L_{1n} \\
\vdots & \ddots & \vdots \\
L_{n1} & \cdots & L_{nn}
\end{pmatrix}
\to 
\begin{pmatrix}
A_{1} & \cdots & M_{1n} \\
\vdots & \ddots & \vdots \\
M_{n1} & \cdots & A_{n}
\end{pmatrix}
    \end{array}
    \]
    is an $[A_{i}';\,M_{ij}']$-balanced $[A_{i};\,M_{ij}]$-$[A_{i};\,M_{ij}]$-bimodule homomorphism.  
    \item $\alpha_{ikj}'\colon L_{ik} \otimes N_{kj} \to M_{ij}'$ are additive maps such that the map 
%     $\left\vert
% \alpha_{ikj}'
% \right\vert$  between matrix bimodules of the form  
        \[
\begin{array}{c}
\begin{vmatrix}
 \alpha_{1k1}' & \cdots & \alpha_{1kn}' \\
\vdots & \ddots & \vdots \\
\alpha_{lkn}' & \cdots & \alpha_{nkn}'
\end{vmatrix}
\colon
\begin{pmatrix}
L_{11} & \cdots & L_{1n} \\
\vdots & \ddots & \vdots \\
L_{n1} & \cdots & L_{nn}
\end{pmatrix}
\times 
\begin{pmatrix}
N_{11} & \cdots & N_{1n} \\
\vdots & \ddots & \vdots \\
N_{n1} & \cdots & N_{nn}
\end{pmatrix}
\to 
\begin{pmatrix}
A_{1}' & \cdots & M_{1n}' \\
\vdots & \ddots & \vdots \\
M_{n1}' & \cdots & A_{n}'
\end{pmatrix}
\end{array}
\]
    is an $[A_{i};\,M_{ij}]$-balanced $[A_{i}';\,M_{ij}']$-$[A_{i}';\,M_{ij}']$-bimodule homomorphism.
\item $\beta_{ikj}(\alpha_{ihk}\otimes \myid)=\beta_{ihj}'(\myid \otimes \alpha_{hkj}')$ ($h,i,j,k=1,\dots,n$) as maps $ N_{ih}\otimes  L_{hk}\otimes N_{kj}\to N_{ij}$. 
\item $\gamma_{ihj}(\myid\otimes \alpha_{hkj})=\gamma_{ikj}'(\alpha_{ihk}'\otimes \myid)$ ($h,i,j,k=1,\dots,n$) as maps $ L_{ih}\otimes N_{hk}\otimes  L_{kj}\to L_{ij}$.  
    \end{itemize}
By the first two items above there are induced bimodule homomorphisms 
\[
\begin{array}{cc}
\left\vert
\underline{\alpha}_{ikj}
\right\vert\colon N_{\fk{m}}\otimes_{[A_{i}';\,M_{ij}']} L_{\fk{m}}\to [A_{i};\,M_{ij}], & 
\left\vert
\underline{\alpha}'_{ikj}
\right\vert\colon L_{\fk{m}}\otimes_{[A_{i};\,M_{ij}]} N_{\fk{m}}\to [A_{i}';\,M_{ij}'].
\end{array}
\]
By the second two items above the mixed associativity conditions hold for the pair of maps $\left\vert
\underline{\alpha}_{ikj}
\right\vert$ and $\left\vert
\underline{\alpha}'_{ikj}
\right\vert$, meaning there is a (classical) Morita context 
\[
\begin{array}{c}
\left([A_{i};\,M_{ij}],[A_{i}';\,M_{ij}'];\, N_{\fk{m}},L_{\fk{m}};\, 
\left\vert
\underline{\alpha}_{ikj}
\right\vert
,
\left\vert
\underline{\alpha}'_{ikj}
\right\vert
\right).
\end{array}
\]
In our main result, \Cref{thm-induced-contexts-for-generalised-matrix-rings}, we establish such a Morita context. 
In \Cref{cor-main-theorem-about-contexts-restricts-to-equivalences} we provide sufficient  conditions for $\left\vert
\underline{\alpha}_{ikj}
\right\vert$ and $\left\vert
\underline{\alpha}_{ikj}'
\right\vert$ to be surjective, in which case 
\[
\begin{array}{c}
L_{\fk{m}}\otimes_{[A_{i},M_{ij}]}-  \colon \lMod{[A_{i};\,M_{ij}]}\to  \lMod{[A_{i}';\,M_{ij}']},
\\
 N_{\fk{m}}\otimes_{[A_{i}',M_{ij}']}-  \colon \lMod{[A_{i}';\,M_{ij}']}\to  \lMod{[A_{i};\,M_{ij}]},
\end{array}
\]
 define mutually quasi-inverse functors by \Cref{thm-recalled-from-Anh-Marki}, yielding a Morita equivalence
\[
\begin{pmatrix}
A_{1} & \cdots & M_{1n} \\
\vdots & \ddots & \vdots \\
M_{n1} & \cdots & A_{n}
\end{pmatrix}
\sim
\begin{pmatrix}
A_{1}' & \cdots & M_{1n}' \\
\vdots & \ddots & \vdots \\
M_{n1}' & \cdots & A_{n}'
\end{pmatrix}.
\]

\section{Excision and ligation}
\label{sec-excision-and-ligation}

In \S\ref{sec-excision-and-ligation} we fix a particular ring-theoretic situation, and set-up the following notation.  
% \begin{itemize}
%     \item $R$, $S$ and $T$ are rings which are associative but not necessarily unital.
%     \item $({}_{R}A_{S},{}_{S}B_{R},\theta\colon A\otimes B\to R,\zeta\colon B\otimes A\to S)$ is a Morita context for $(R,S)$.
%     \item $({}_{S}C_{T},{}_{T}D_{S},\varphi\colon C\otimes D\to S,\psi\colon D\otimes C\to T)$ is a Morita context for $(S,T)$. 
%     \item $\CM(A,B,\theta,\zeta)=\begin{bsmallmatrix}
% R & A \\
% B & S
% \end{bsmallmatrix}$ and $\CM(C,D,\varphi,\psi)=\begin{bsmallmatrix}
% S & C \\
% D & T
% \end{bsmallmatrix}$ are the Morita rings.
% \end{itemize}
\begin{itemize}[$*$]
    \item $A_{1},\dots,A_{n}$, $R$ and $S$ are rings which are associative but not necessarily unital.
    \item $R=A_{t}$ as rings for some $t=1,\dots,n$ which is fixed but arbitrary.
    \item $(R,S;\,{}_{R}N_{S},{}_{S}L_{R};\,\zeta\colon N\otimes L\to R,\theta\colon L\otimes N\to S,)$ is a classical Morita context.
    \item $(A_{i};\,M_{ij};\,\varphi_{ikj}\colon M_{ik}\otimes M_{kj}\to M_{ij})$ is a generalised Morita context. 
    \item $[A_{i};\,M_{ij}]$ is the generalised matrix ring for $(A_{i};\,M_{ij};\,\varphi_{ikj})$, denoted in detail by 
    \[
    \begin{array}{c}
    [A_{i};\,M_{ij}]
    =
    \begin{bmatrix}
A_{1} & \cdots & M_{1t} & \cdots & M_{1n} \\
\vdots & \ddots & \vdots& \ddots & \vdots \\
M_{t1} & \cdots & R      & \cdots & M_{tn} \\
\vdots & \ddots & \vdots & \ddots & \vdots \\
M_{n1} & \cdots & M_{nt} & \cdots & A_{n} \\
\end{bmatrix}
    \end{array}
    \]
\end{itemize}
% Note that $S$ arises in each Morita context and in opposite corners of each Morita ring. 

In Definition \ref{defn-extension-over-excision-ring} we construct a new Morita context (see \Cref{lem-extension-is-a-Morita-context-associativity})  whose  matrix ring is found by \emph{excising} entries of   $[A_{i};\, M_{ij}]$ in  row $t$ or column $t$, and \emph{ligating} the unchanged entries with substituted bimodules, such that $R$ changes to $S$.

\subsection{Excision}
\label{sec-excision}

% The use of the word \emph{excision} in the title of \S\ref{sec-excision-and-ligation} refers to the idea of cutting $R$ out from $[A_{i};\,M_{ij}]$. 
% Likewise the use of the word \emph{ligation} refers to replacing it with $S$ to give an adjusted generalised matrix ring $[A_{i}';\,M_{ij}']$. 

In Definition \ref{defn-extension-over-excision-ring} we generalise the \emph{composition} of a pair of classical Morita contexts from the thesis of of Mar{\'i}n  \cite[Proposition 4.11]{Marin-1997-master-thesis-categories-and-rings} (c.f. V\"{a}ljako and Laan \cite[\S 4]{Valjako-Lann-Morita-contexts-2021}). 
\begin{defn}
\label{defn-extension-over-excision-ring}
% The \emph{composition} $(A,B,\theta,\zeta)$ \emph{with} $(C,D,\varphi,\psi)$ \emph{over} $S$ is
The \emph{composition} of $(R,S;\,N,L;\,\zeta,\theta)$ and $(A_{i};\,M_{ij};\,\varphi_{ikj})$ is the tuple
% \[
% \begin{array}{c}
% (A,B,\theta,\zeta)\circ(C,D,\varphi,\psi)=
% (A\otimes_{S} C,D\otimes_{S}B,\theta(\varphi),\psi(\zeta))
% \end{array}
% \]
\[
\begin{array}{c}
(R,S;\,N,L;\,\zeta,\theta)\circ(A_{i};\,M_{ij};\,\varphi_{ikj})=
(A_{i}';\,M_{ij}';\,\varphi_{ijk}'),
\end{array}
\] 
which is given by the rings, bimodules and bimodule homomorphisms defined below. 
\begin{itemize}[$*$]
    \item $A_{i}'$, the rings, are  given by $A_{t}'=S$ for $i=t$, and $A_{i}'=A_{i}$ for $i\neq t$. 
\item $M_{ij}'$, the $A_{i}'$-$A_{j}'$-bimodules, are  defined as follows
\[
\begin{array}{cc}
M_{tt}'=S & (i=j=t),
\vspace{1mm}
\\
M_{it}'=M_{it}\otimes_{R} N & (j=t\neq i),
\vspace{1mm}
\\
M_{tj}'=L\otimes_{R} M_{tj} &
 (i=t\neq j),
\vspace{1mm}
\\
 M_{ij}'=M_{ij} & (i,j\neq t).
\end{array}
\]
\item $\varphi_{ikj}'\colon M_{ik}'\otimes_{A_{k}'}M_{kj}'\to M_{ij}'$, the bimodule homomorphisms, are   defined as follows
\[
\begin{array}{cc}
\mu_{S}\colon S\otimes S \to S & (i=k=j=t),
\vspace{1mm}
\\
\myid_{it}^{M}\otimes \rho_{N}^{S}\colon M_{it}\otimes N\otimes S \to M_{it}\otimes N 
&
(k=j=t\neq i),  
\vspace{1mm}
\\
\lambda_{L}^{S}\otimes \myid^{M}_{tj}\colon S\otimes L\otimes M_{tj} \to L\otimes M_{tj}
&
(i=k=t\neq j),  
\vspace{1mm}
\\
\theta\circ(\rho_{L}^{R}\otimes \myid_{N})\circ(\myid_{L}\otimes \varphi_{tkt}\otimes \myid_{N})\colon L\otimes M_{tr}\otimes M_{rt}\otimes N \to S
&
(i=j=t\neq k),  
\vspace{1mm}
\\
\varphi_{ikt}\otimes \myid_{N}\colon  M_{ik}\otimes M_{kt}\otimes N \to M_{it}\otimes N
&
(j=t\neq i,k),
\vspace{1mm}
\\
\myid_{L}\otimes \varphi_{tkj}\colon  L\otimes_{R} M_{tk}\otimes M_{kj}\to L\otimes M_{tj}
&
(i=t\neq k,j), 
\vspace{1mm}
\\
\varphi_{itj}\circ(\rho_{M_{it}}^{R}\otimes \myid_{tj}^{M})\circ(\myid_{it}^{M} \otimes\zeta \otimes\myid_{tj}^{M} )\colon M_{it}\otimes  N\otimes L\otimes M_{tj}\to M_{ij} & (k=t\neq i,j),
\vspace{1mm}
\\
\varphi_{ikj}\colon M_{ik}\otimes M_{kj}\to M_{ij} & (i,j,k\neq t).
\end{array}
\]
\end{itemize}
% where $\theta(\varphi)$ and $\psi(\zeta)$ are, respectively, given by the compositions 
% \[
% \begin{tikzcd}[row sep=3mm]
% A\otimes_{S}C\otimes_{T}D\otimes_{S}B\ar[rr, "\myid_{A}\otimes \varphi \otimes \myid_{B}"] 
%     && A\otimes_{S} S\otimes_{S} B\ar[rrr, "\rho^{A}_{S}\otimes \myid_{B}=\myid_{A}\otimes\lambda_{S}^{B}"] 
%     &&& A\otimes_{S} B\ar[r, "\theta"] 
%     & R\\
%     D\otimes_{S}B\otimes_{R}A\otimes_{S}C\ar[rr, "\myid_{D}\otimes \zeta \otimes \myid_{C}"] 
%     && D\otimes_{R} R\otimes_{R} C\ar[rrr, "\rho^{D}_{R}\otimes \myid_{C}=\myid_{D}\otimes\lambda^{C}_{R}"] 
%     &&& D\otimes_{R} C\ar[r, "\psi"] 
%     & T
% \end{tikzcd}
% \]
\end{defn}

\begin{lem}
\label{lem-extension-is-a-Morita-context-associativity}
The composition $(A_{i}';\,M_{ij}';\,\varphi_{ikj}')$ of $(R,S;\,N,L;\,\zeta,\theta)$ with $(A_{i};\,M_{ij};\,\varphi_{ikj})$ is a generalised Morita context. 
Furthermore, if the rings $R,S,A_{i}$ have local units and the bimodules $N,L,M_{ij}$ are unital, then the bimodules $M_{ij}'$ are unital.
\end{lem}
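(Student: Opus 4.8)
The plan is to verify in turn the three defining conditions of a generalised Morita context from \S\ref{subsec-morita-context-ring} for the tuple $(A_i';\,M_{ij}';\,\varphi_{ikj}')$, and then the unitality claim. That the $A_i'$ are associative rings and that each $M_{ij}'$ is an $A_i'$-$A_j'$-bimodule with $M_{ii}'=A_i'$ is immediate: $A_t'=S$ and $A_i'=A_i$ otherwise, while the module structures on $M_{it}'=M_{it}\otimes_R N$, $M_{tj}'=L\otimes_R M_{tj}$ and $M_{tt}'=S$ are the evident ones inherited from the outer tensor factors (for instance the right $S$-action on $M_{it}\otimes_R N$ is through $N$). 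Verifying the unit conditions $\varphi_{kkj}'=\lambda^{A_k'}_{M_{kj}'}$, $\varphi_{kkk}'=\mu_{A_k'}$ and $\varphi_{ikk}'=\rho^{A_k'}_{M_{ik}'}$ is then a short comparison of cases; for example $\varphi_{ttj}'=\lambda_L^S\otimes\myid$ is exactly $\lambda^S_{L\otimes_R M_{tj}}$ under the identification $S\otimes_S(L\otimes_R M_{tj})\cong(S\otimes_S L)\otimes_R M_{tj}$, and the remaining diagonal-adjacent maps are similar. Finally, each $\varphi_{ikj}'$ is well defined on the stated tensor product and is an $A_i'$-$A_j'$-bimodule map; this uses the corresponding property of $\varphi_{ikj}$ together with the facts that $\zeta$ and $\theta$ are bimodule maps and that $\lambda,\rho,\mu$ are the structure maps.

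The substance of the lemma is the mixed associativity $\varphi_{ihj}'(\myid\otimes\varphi_{hkj}')=\varphi_{ikj}'(\varphi_{ihk}'\otimes\myid)$, which I would organise according to which of the four indices $i,h,j,k$ equal $t$. The only maps involving the pairings are $\varphi_{itj}'$ with $i,j\neq t$ (which contains the $R$-valued $\zeta\colon N\otimes_S L\to R$) and $\varphi_{tkt}'$ with $k\neq t$ (which contains the $S$-valued $\theta\colon L\otimes_R N\to S$). Thus, among the four maps $\varphi_{ihj}',\varphi_{hkj}',\varphi_{ikj}',\varphi_{ihk}'$ appearing in the identity, a factor of $\theta$ occurs exactly when one of the pairs $(i,j)$, $(h,j)$, $(i,k)$ consists of two copies of $t$ with the corresponding middle index $\neq t$, and a factor of $\zeta$ occurs exactly when one of the middle indices $h,k$ equals $t$ with both of its neighbouring outer indices $\neq t$. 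Comparing these two requirements, a single identity contains both a $\zeta$ and a $\theta$ in exactly the two configurations $(i,h,j,k)=(\neq t,\,t,\,t,\,\neq t)$ and $(i,h,j,k)=(t,\,\neq t,\,\neq t,\,t)$; these are interchanged by the transpose symmetry $(i,h,j,k)\mapsto(j,k,i,h)$ together with the duality $N\leftrightarrow L$, $\zeta\leftrightarrow\theta$ that the whole construction enjoys.

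I expect these two configurations to be the main obstacle, as they are the only ones in which the two pairings must be made to cancel against each other. In every other configuration at most one of $\zeta,\theta$ is present, and the identity reduces---after stripping off the outer factors $L\otimes_R-$ and $-\otimes_R N$ and invoking balancedness of the tensor products---either to the unmodified mixed associativity of $(A_i;\,M_{ij};\,\varphi_{ikj})$ or to an action axiom for $A_k$, $R$ or $S$. For the configuration $(\neq t,\,t,\,t,\,\neq t)$ I would evaluate both sides on a generator $(m_{it}\otimes n)\otimes(l\otimes m_{tk})\otimes(m_{kt}\otimes n')$ of $M_{it}'\otimes M_{tk}'\otimes M_{kt}'$. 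Using the original associativity $\varphi_{ikt}(\varphi_{itk}\otimes\myid)=\varphi_{itt}(\myid\otimes\varphi_{tkt})$ and $\varphi_{itt}=\rho^R_{M_{it}}$, the right-hand side becomes $(m_{it}\cdot r'')\otimes n'$ with $r''=\zeta(n\otimes l)\cdot\varphi_{tkt}(m_{tk}\otimes m_{kt})\in R$; using right $R$-linearity of $\zeta$ and the Morita relation $\zeta(n\otimes l')\cdot n'=n\cdot\theta(l'\otimes n')$, the left-hand side becomes $m_{it}\otimes(r''\cdot n')$, and the two agree by $R$-balancedness of $M_{it}\otimes_R N$. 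The configuration $(t,\,\neq t,\,\neq t,\,t)$ is treated identically, using instead the dual relation $l\cdot\zeta(n\otimes l')=\theta(l\otimes n)\cdot l'$ and landing in $L\otimes_R M_{tj}$. This establishes that $(A_i';\,M_{ij}';\,\varphi_{ikj}')$ is a generalised Morita context.

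For the final assertion, assume $R,S,A_i$ have local units and $N,L,M_{ij}$ are unital. A ring with local units is unital as a module over itself, since any $x$ lies in some $eSe$ and hence equals $e\cdot x$ in the image of $\mu_S$; thus $M_{tt}'=S$ is unital. For $i\neq t$, the bimodule $M_{it}'=M_{it}\otimes_R N$ is unital on the left because $\lambda^{A_i}_{M_{it}}$ is surjective and $-\otimes_R N$ preserves surjections, and unital on the right because $\rho^S_N$ is surjective and $M_{it}\otimes_R-$ preserves surjections; the case $M_{tj}'=L\otimes_R M_{tj}$ is symmetric, and $M_{ij}'=M_{ij}$ for $i,j\neq t$ is unital by hypothesis. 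Hence every $M_{ij}'$ is unital.
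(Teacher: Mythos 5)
Your proposal is correct and follows essentially the same route as the paper: the first four conditions hold by construction, the mixed associativity is checked case-by-case with the decisive computation being exactly the configuration $h=j=t\neq i,k$ where $\zeta$ and $\theta$ must cancel via the relation $\zeta(n\otimes l)\cdot n'=n\cdot\theta(l\otimes n')$, and unitality follows from the surjectivity of the structure maps on the tensor factors. Your explicit bookkeeping of which index patterns produce a $\zeta$ or a $\theta$ is a welcome refinement that justifies the paper's ``remaining cases are similar,'' but it does not change the substance of the argument.
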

\begin{proof}
By construction we have that the first four conditions in \S\ref{subsec-morita-context-ring} hold. 
To see that the composition is a generalised Morita context it suffices to check the fifth condition holds. 

Hence we begin by checking that $\varphi_{ihj}'\circ (\myid^{M\prime}_{ih}\otimes \varphi_{hkj}')=\varphi_{ikj}'\circ(\varphi_{ihk}'\otimes\myid^{M\prime}_{kj})$ holds for all $h,i,j,k=1,\dots, n$. 
When $h=i=j=k=t$ this equation follows from the associativity of multiplication in the ring $S$. 
When $t\neq h,i,j,k$ this equation follows from the fact that $(A_{i};\,M_{ij};\,\varphi_{ikj})$ is a generalised Morita context. 

We now consider the case where $h=j=t\neq i,k$. 
By definition we have
\[
\begin{array}{c}
 \varphi_{itj}'((\myid^{M\prime}_{it}\otimes \varphi_{tkj}')(m\otimes n\otimes l'\otimes m'\otimes m''))
 =
m\otimes n\theta(l'\varphi_{tkt}(m'\otimes m'')\otimes n'')
\end{array}
\]
for all $m\otimes n\in M_{it}\otimes_{R} N$ and $l'\otimes m'\in L\otimes_{R} M_{tk}$ and $m''\otimes n''\in M_{kt}\otimes_{R} N$. 
Since $\zeta$ is right $R$-linear, since $\varphi_{tkt}$ is left $R$-linear,  and by the associativity conditions for the classical Morita context $(R,S;\,N,L;\,\zeta,\theta)$, we have that $n\theta(l'\varphi_{tkt}(m'\otimes m'')\otimes n'')$ is equal to $\varphi_{tkt}(\zeta(n\otimes l')m'\otimes m'')n''$ 
which means that 
\[
\begin{array}{c}
 \varphi_{itj}'((\myid^{M\prime}_{it}\otimes \varphi_{tkj}')(m\otimes n\otimes l'\otimes m'\otimes m''))
 =
\varphi_{itt}(m\otimes \varphi_{tkt}(\zeta(n\otimes l')m'\otimes m''))\otimes n''
\end{array}
\]
which is the image of $m\otimes n\otimes l'\otimes m'\otimes m''\otimes n''$ under $\varphi_{tkj}'\circ(\varphi_{ttk}'\otimes\myid^{M\prime}_{kj})$ by definition.

Now the remaining cases are similar, and so we omit the  details.

For the second assertion in the statement, consider how the right $S$-action on $N$ defines the right action on each $M_{pt}'$ and similarly consider the left action of $S$ on each $M_{tq}'$. 
% Consider fixed arbitrary elements $a,a'\in A$, $b,b'\in B$, $c,c'\in C$ and $d,d'\in D$. 
% Then
% \[
% \begin{split}
% d'\otimes b' \theta(\varphi)(a\otimes c\otimes d\otimes b)
% =d'\otimes b' \theta(a \varphi(c\otimes d)\otimes b)\\
% =d'\otimes \zeta(b'\otimes a\varphi(c\otimes d)) b
% =d'\otimes \zeta(b'\otimes a)\varphi(c\otimes d)b\\
% d'\otimes \varphi(\zeta(b'\otimes a)c\otimes d)b
% =d'\varphi(\zeta(b'\otimes a)c\otimes d)\otimes b\\
% =\psi(d'\otimes\zeta(b'\otimes a)c)d\otimes b
% =\psi(\zeta)(d'\otimes b'\otimes a\otimes c)d\otimes b.
% \end{split}
% \]
% Similarly it is straightforward to check that
% \[
% \begin{array}{c}
% \theta(\varphi)(a\otimes c\otimes d\otimes b)a'\otimes c'=a\otimes c\psi(\zeta)(d\otimes b\otimes a'\otimes c'), 
% \end{array}
% \]
% and so $(A\otimes_{S} C,D\otimes_{S}B,\theta(\varphi),\psi(\zeta))$ is indeed a Morita context for $(R,T)$. 
% Observe that $A\otimes C$ is a unital left $R$-module provided the same is true for $A$, and a unital right $T$-module provided the same is true for $C$. 
% Likewise, assuming $D$ is a unital left $T$-module and that $B$ is a unital right $S$-module implies that $D\otimes_{S}B$ is a unital $T$-$R$-bimodule. 
\end{proof}

By \Cref{lem-extension-is-a-Morita-context-associativity} we can and do fix the following additional notation for what remains. 
\begin{itemize}[$*$]
 \item $(A_{i}';\,M_{ij}';\,\varphi_{ikj}'\colon M_{ik}'\otimes M_{kj}'\to M_{ij}')$ is the generalised Morita context found by the composition of  $(R,S;\,N,L;\,\zeta,\theta)$ and $(A_{i};\,M_{ij};\,\varphi_{ikj})$ at $t=1,\dots,n$. 
    \item $[A_{i}';\,M_{ij}']$ is the generalised matrix ring for $(A_{i}';\,M_{ij}';\,\varphi_{ikj}')$, depicted in detail by 
    \[
    \begin{array}{c}
    [A_{i}';\,M_{ij}']
    =
    \begin{bmatrix}
A_{1} & \cdots & M_{1t}\otimes_{R} N & \cdots & M_{1n} \\
\vdots & \ddots & \vdots& \ddots & \vdots \\
L\otimes_{R}M_{t1} & \cdots & S      & \cdots & L\otimes_{R}M_{tn} \\
\vdots & \ddots & \vdots & \ddots & \vdots \\
M_{n1} & \cdots & M_{nt}\otimes_{R} N & \cdots & A_{n} \\
\end{bmatrix}
    \end{array}
    \]
\end{itemize}

\begin{defn}
\label{defn-column-excision-row-excision-additive-groups}
By the \emph{column}-\emph{excision} $(N_{ij})$ \emph{of} $(A_{i};\,M_{ij};\,\varphi_{ikj})$ \emph{by}  $(R,S;\,N,L;\,\zeta,\theta)$ \emph{at} $t=1,\dots,n$ we mean the direct sum of additive groups defined and denoted by 
 \[
    \begin{array}{c}
\begin{pmatrix}
N_{11}  & \cdots & N_{1n} \\
\vdots & \ddots & \vdots \\
N_{n1}  & \cdots & N_{nn} 
\end{pmatrix}
=
 \begin{pmatrix}
A_{1} & \cdots & M_{1t}\otimes_{R} N & \cdots & M_{1n} \\
\vdots & \ddots & \vdots& \ddots & \vdots \\
M_{t1} & \cdots & R\otimes_{R}N     & \cdots & M_{tn} \\
\vdots & \ddots & \vdots & \ddots & \vdots \\
M_{n1} & \cdots & M_{nt}\otimes_{R} N & \cdots & A_{n} 
\end{pmatrix}
    \end{array}
    \]
    That is, $(N_{ij})$ is found by tensoring of each component in column $t$ of $[A_{i};\,M_{ij}]$ with $N$. 
\end{defn}

\Cref{lem-column-excision-bimodule-structure} explains the reason for depicting the column-excision as we have. 
As seen in the proof, the bimodule is canonical in that the left and right actions correspond, as in \S\ref{subsec-morita-context-ring}--\ref{subsec-morita-context-associativty-of-balanced-maps}, to matrix multiplication with respect to the bimodule maps involved so far.

For each $j\neq t$, the additive group given by column $j$ in $(N_{ij})$ is equal to that found by taking column $j$ in the regular module for $[A_{i};\,M_{ij}]$. 
In \Cref{lem-column-excision-bimodule-structure} we equip such a group the inherited left $[A_{i};\,M_{ij}]$-module structure given by multiplication in $[A_{i};\,M_{ij}]$. 

For column $t$ in $(N_{ij})$, we note that this left module is essentially constructed in items (i) and (iii) from \cite[pp 1490--1491]{Green-Psaroudakis-artin-algebras-2014} in case $n=2$. 
Since the rings and modules we consider need not be unital, we recall this construction for completeness, and repeat the check that the given action is a left module action.

\begin{lem}
\label{lem-column-excision-bimodule-structure}
The column-excision $(N_{ij})$ 
% of $(A_{i};\,M_{ij};\,\varphi_{ikj})$ by  $(R,S;\,N,L;\,\zeta,\theta)$ at $t=1,\dots,n$
has the following additional (canonical) structure.
\begin{enumerate}[(i)]
    \item $(N_{ij})$ is a left $[A_{i};\,M_{ij}]$-module \emph{(}$[A_{i};\,M_{ij}]$ is the generalised matrix ring\emph{)}.
    \item $(N_{ij})$ is a right $[A_{i}';\,M_{ij}']$-module \emph{(}$(A_{i}';\,M_{ij}';\,\varphi_{ijk}')$ is the composition\emph{)}. 
    \item $(N_{ij})$ is an $[A_{i};\,M_{ij}]$-$[A_{i}';\,M_{ij}']$-matrix-bimodule with respect to (i)  and (ii).
\end{enumerate}
\end{lem}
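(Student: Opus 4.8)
The plan is to exhibit explicit left and right structure maps on $(N_{ij})$ and verify the axioms of \S\ref{subsec-morita-context-modules} and \S\ref{subsec-morita-context-bimodules} directly, treating (iii) as the conjunction of (i), (ii) and a single commutation condition between the two actions.

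For (i) I would equip each column $N_{\fk{c}j}=(N_{1j},\dots,N_{nj})$ with left structure maps $\gamma_{ikj}\colon M_{ik}\otimes N_{kj}\to N_{ij}$. For $j\neq t$ the column $N_{\fk{c}j}$ equals the $j$-th column $(M_{1j},\dots,M_{nj})$ of the regular module, so I set $\gamma_{ikj}=\varphi_{ikj}$; the column-collection condition $\gamma_{ihj}(\varphi_{ikh}\otimes\myid)=\gamma_{ikj}(\myid\otimes\gamma_{khj})$ is then precisely the mixed associativity condition for $(A_i;\,M_{ij};\,\varphi_{ikj})$. For $j=t$ one has $N_{kt}=M_{kt}\otimes_R N$, so I set $\gamma_{ikt}=\varphi_{ikt}\otimes\myid_N$, and the same axiom becomes the mixed associativity condition $\varphi_{iht}(\varphi_{ikh}\otimes\myid)=\varphi_{ikt}(\myid\otimes\varphi_{kht})$ tensored on the right with $\myid_N$, which holds by functoriality of $-\otimes_R N$. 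This gives (i).

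For (ii) I would first observe that for each $i\neq t$ the row $(N_{i1},\dots,N_{in})$ coincides with the $i$-th row of $[A_i';\,M_{ij}']$, since $N_{ij}=M_{ij}=M_{ij}'$ for $j\neq t$ and $N_{it}=M_{it}\otimes_R N=M_{it}'$; such rows therefore carry the regular right $[A_i';\,M_{ij}']$-module structure, whose associativity is already guaranteed by \Cref{lem-extension-is-a-Morita-context-associativity}. Only the row $i=t$ needs fresh attention, as there $N_{tj}=M_{tj}$ for $j\neq t$ and $N_{tt}=R\otimes_R N\cong N$ differ from the corresponding $M_{tj}'$. I would define the right structure maps $\beta_{tkj}\colon N_{tk}\otimes M_{kj}'\to N_{tj}$ by $\varphi_{tkj}$ for $k,j\neq t$, by $\varphi_{tkt}\otimes\myid_N$ for $j=t\neq k$, by $\lambda_{M_{tj}}^{R}\circ(\zeta\otimes\myid)$ for $k=t\neq j$, and by $\rho_N^{S}$ for $k=j=t$. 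The row-collection condition $\beta_{tlj}(\beta_{tkl}\otimes\myid)=\beta_{tkj}(\myid\otimes\varphi_{klj}')$ is then verified case by case: the cases avoiding the index $t$ internally reduce to mixed associativity for $(A_i;\,M_{ij};\,\varphi_{ikj})$, whereas the cases with $k=t$ or $l=t$ route through $\zeta\colon N\otimes L\to R$ and are settled by the associativity conditions of the classical Morita context $(R,S;\,N,L;\,\zeta,\theta)$ together with the $R$-balancing of the tensor factors.

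For (iii), with the maps $\gamma_{ikj}$ of (i) and $\beta_{ikj}$ of (ii) in place, it remains only to check the compatibility of \S\ref{subsec-morita-context-bimodules}, namely $\beta_{ikj}(\gamma_{ihk}\otimes\myid)=\gamma_{ihj}(\myid\otimes\beta_{hkj})$ as maps $M_{ih}\otimes N_{hk}\otimes M_{kj}'\to N_{ij}$, asserting that the left and right actions commute. I expect this to be the main obstacle: the delicate cases are those in which the left factor acts through the $t$-th row or column while the right factor acts through $\zeta$, so that one must move $\zeta$ past a structure map $\varphi$ and exploit the $R$-balancing exactly as in the construction of those $\varphi_{ikj}'$ involving $\zeta$ in \Cref{defn-extension-over-excision-ring}. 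Since these computations run in parallel with those already performed in the proof of \Cref{lem-extension-is-a-Morita-context-associativity}, once the bookkeeping is organised by the position of $t$ among $h,i,j,k$ the remaining cases are routine and may be omitted.
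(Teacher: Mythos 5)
Your proposal follows essentially the same route as the paper's proof: the same left structure maps $\varphi_{ikj}$ and $\varphi_{ikt}\otimes\myid_{N}$ for (i), the same observation that only row $t$ needs attention in (ii) together with the same case-by-case right structure maps routed through $\zeta$, and the same reduction of (iii) to the single left/right compatibility condition checked by cases on the position of $t$ among the indices. The one point to be careful about is your identification $N_{tt}=R\otimes_{R}N\cong N$: in the non-unital generality of \S3 the map $\lambda_{N}^{R}$ need not be an isomorphism, which is why the paper keeps the $(t,t)$ entry as $R\otimes_{R}N$ and writes, for instance, $\myid_{R}\otimes\rho_{N}^{S}$ where you write $\rho_{N}^{S}$; your structure maps are otherwise the paper's composed with this identification, so the adjustment is purely notational.
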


\begin{proof}
Firstly note that $(N_{ij})$ can be decomposed into additive groups, either consisting of columns, or consisting of rows. 

Following \S\ref{subsec-morita-context-modules} we show, in parts (i) and (ii) respectively, that these columns are in fact column collections for $[A_{i};\,M_{ij}]$ and row collections for $[A_{i}';\,M_{ij}']$. 
Showing this shall respectively verify the first two conditions from \S\ref{subsec-morita-context-bimodules}. 
Noting that the third condition holds by construction, in part (iii) it will be sufficient to check the fourth condition holds. 

(i) For each $j\neq t$ let $N_{ij}=M_{ij}$ and consider the column $N_{\fk{c} j}=(N_{1j},\dots,N_{nj})$ equipped with the left structure maps $\beta_{ikj}=\varphi_{ikj}$. This gives $N_{\fk{c}j}$ the structure of a left $[A_{i};\,M_{ij}]$-module. 
We now deal with column $t$, which is more involved.

So, consider $N_{\fk{c}t}=(N_{1t},\dots , N_{nt})$ where $N_{it}=M_{it}\otimes_{R}N$ for each $i$, depicted as a column $(M_{1t}\otimes_{R}N , \dots, R\otimes_{R}N,\dots, M_{nt}\otimes_{R}N)$ of additive groups. 
Define the left structure maps  
% $\alpha_{hi}\colon M_{hi}\times M_{it}\otimes_{R}N\to M_{ht}\otimes_{R}N$ sends $(m,m'\otimes n)$ to $\varphi_{hit}(m\otimes m')\otimes n$
$\beta_{ikt} \colon M_{ik}\otimes_{A_{k}} N_{kt}\to N_{it}$ ($i,k=1,\dots,n$) by $\beta_{ikt}=\varphi_{ikt}\otimes \myid_{N}$. 

Again by the associativity conditions for $(A_{i};\,M_{ij};\,\varphi_{ikj})$,  we have 
\[
\begin{split}
    \beta_{iht} (\varphi_{ikh}\otimes \myid_{h}^{X})
    % =(\varphi_{iht}\otimes \myid_{N}) (\varphi_{ikh}\otimes \myid_{h}^{X})
    =(\varphi_{iht}\otimes \myid_{N}) (\varphi_{ikh}\otimes \myid_{ht}^{M}\otimes \myid_{N})
    =(\varphi_{iht} (\varphi_{ikh}\otimes \myid_{ht}^{M}))\otimes \myid_{N}
     %(\varphi_{prq} (\varphi_{psr}\otimes \myid_{rq}^{M}))=\varphi_{psq} (\myid_{ps}^{M}\otimes \varphi_{srq})
    %i=p,h=r,t=q,k=s
     \\
     =(\varphi_{ikt} (\myid_{ik}^{M}\otimes \varphi_{kht}))\otimes \myid_{N}
      =(\varphi_{ikt}\otimes \myid_{N}) (\myid_{ik}^{M}\otimes \varphi_{kht}\otimes \myid_{N})
    =\beta_{ikt}(\myid^{M}_{ih}\otimes \beta_{kht}),
\end{split}
\]
which verifies the fourth condition from \S\ref{subsec-morita-context-modules}, and thus the first condition from \S\ref{subsec-morita-context-bimodules}. 

(ii) As in the proof of (i) above, we need only consider the additive groups coming from row $t$ since the other rows are identical to the corresponding row in the generalised matrix ring $[A_{i}';\,M_{ij}']$.  
That is, we are letting $\beta_{ikj}'=\varphi_{ikj}'$  whenever $i\neq t$. 

So, now consider the direct sum  of the $R$-$A_{i}$-bimodules $N_{ti}$ given by $N_{ti}=M_{ti}$ for $i\neq t$ and $N_{tt}=R\otimes N$. 
% Consider $Y$ as a row $(M_{t1} , \dots, R\otimes N,\dots, M_{tn})$ of additive groups.  
Define the right structure maps $\beta_{thi}'\colon N_{th}\otimes_{A_{h}'}M_{hi}'\to N_{ti}$ by
\[
\begin{array}{cc}
\myid_{R}\otimes \rho_{N}^{S}\colon
R\otimes N\otimes_{S}S\to R\otimes N & 
(h=t=i),
\\
\lambda_{M_{ti}}^{R}\circ(\mu_{R}\otimes \myid^{M}_{ti})\circ(\myid_{R}\otimes \zeta\otimes \myid^{M}_{ti})\colon
R\otimes_{R}N\otimes_{S}L\otimes_{R}M_{ti}\to M_{ti} 
& 
(h=t\neq i),
\\
\varphi_{tht}\otimes \myid_{N}\colon 
M_{th}\otimes_{A_{h}}M_{ht}\otimes_{R}N\to R\otimes_{R} N   
& 
(h\neq t=i),
\\
\varphi_{thi}\colon 
M_{th}\otimes_{A_{h}}M_{hi}\to M_{ti}
&
(h\neq t\neq i).
\end{array}
\]
% $Y_{t}\otimes_{A_{t}'}M_{tt}'\to Y_{t}=
% $
% $Y_{t}\otimes_{A_{t}'}M_{ti}'\to Y_{i}=
% $
% $Y_{h}\otimes_{A_{h}'}M_{ht}'\to Y_{t}=
% $
% $Y_{h}\otimes_{A_{h}'}M_{hi}'\to Y_{i}=
% $
We check that the equation $\beta_{thj}' (\myid_{th}^{Y}\otimes \varphi_{hij}')
   =
\beta_{tij}'(\beta_{thi}'\otimes \myid_{ij}^{M\prime})$  holds for the case $h=j=t\neq i$. 
The cases where $h=t\neq i,j$ and $i=t\neq h,j$ are similar, and the remaining cases are more straightforward, and so such details are omitted. 

Begin by noting that for any $r\in R$, $n,n'\in N$, $l\in L$, $m\in M_{it}$ and $m'\in M_{ti}$ we have 
\[
 \begin{split}
  r\otimes n\theta(l\varphi_{tit}(m\otimes m')\otimes n')
   =
  r\otimes \zeta(n\otimes l\varphi_{tit}(m\otimes m')) n'
   \\
   =
  r\otimes \zeta(n\otimes l)\varphi_{tit}(m\otimes m') n'
   =
  r\otimes \varphi_{tit}(\zeta(n\otimes l)m\otimes m') n'
   \\
   =
  r\varphi_{tit}(\zeta(n\otimes l)m\otimes m')\otimes  n'
   =
  \varphi_{tit}(r\zeta(n\otimes l)m\otimes m')\otimes  n'
  \\
   =
  \varphi_{tit}(\lambda_{M_{ti}}^{R}(\mu_{R}(r\otimes \zeta(n\otimes l))\otimes m)\otimes m')\otimes  n'.
 \end{split}
\]
As a result we have the following equalities,
\[
\begin{split}
   (\myid_{R}\otimes \rho_{N}^{S}) (\myid_{tt}^{Y}\otimes \varphi_{tit}')
    =
    (\myid_{R}\otimes \rho_{N}^{S}) (\myid_{tt}^{Y}\otimes (\theta(\rho_{L}^{R}\otimes \myid_{N})(\myid_{L}\otimes \varphi_{tit}\otimes \myid_{N})))
    \\
    =
    (\myid_{R}\otimes \rho_{N}^{S}) ((\myid_{R}\otimes\myid_{N})\otimes (\theta(\rho_{L}^{R}\otimes \myid_{N})(\myid_{L}\otimes \varphi_{tit}\otimes \myid_{N})))
\\
=
(\varphi_{tit}(\lambda_{M_{ti}}^{R}(\mu_{R}\otimes \myid^{M}_{ti})(\myid_{R}\otimes \zeta\otimes \myid^{M}_{ti}))\otimes \myid_{it}^{M})\otimes\myid_{N}
    =    (\varphi_{tit}\otimes \myid_{N})(\beta_{ti}'\otimes \myid_{it}^{M}\otimes\myid_{N}).
\end{split}
\]
Thus we have $\beta_{ttt}' (\myid_{tt}^{Y}\otimes \varphi_{tit}')
   =
\beta_{tit}'(\beta_{tti}'\otimes \myid_{it}^{M\prime})$, which  verifies the dual of the fourth condition from \S\ref{subsec-morita-context-modules} (for the case we cover), and thus the second condition from \S\ref{subsec-morita-context-bimodules}. 

(iii) As discussed above, the third condition from \S\ref{subsec-morita-context-bimodules} is automatic and it is sufficient from here to check the fourth condition holds. 
Hence we will show $\beta_{ilk} 
(\myid_{il}^{M}\otimes
\beta_{lkj}')
=
\beta_{ikj}' 
(\beta_{ilk}\otimes
\myid_{kj}^{M\prime})$ for all $i,j,k,l=1,\dots,n$. 
First consider cases based on which elements of $\{i,j,k ,l\}$ are equal to $t$. 
The proof in the case $k,l=t\neq i,j$ follows from similar arguments to those used in the proofs of (i) and (ii) above, noting that we have
\[
\begin{array}{cc}
\varphi_{ihj} (\myid^{M}_{ih}\otimes \varphi_{hkj})=\varphi_{ikj}(\varphi_{ihk}\otimes\myid^{M}_{kj}),
&
\varphi_{ihj}' (\myid^{M\prime}_{ih}\otimes \varphi_{hkj}')=\varphi_{ikj}'(\varphi_{ihk}'\otimes\myid^{M\prime}_{kj})
\end{array}
\]
by the assumption that $(A_{i};\,M_{ij};\,\varphi_{ikj})$ is a generalised Morita context, and by \Cref{lem-extension-is-a-Morita-context-associativity}. 
The remaining cases either again follow from similar arguments as used in (i) and (ii), or are more straightforward, and so we omit them. 
\end{proof}

\begin{defn}
\label{defn-row-excision-row-excision-additive-groups}
By the \emph{row}-\emph{excision} $(L_{ij})$ 
% \emph{of} $(A_{i};\,M_{ij};\,\varphi_{ikj})$ \emph{by}  $(R,S;\,N,L;\,\zeta,\theta)$ \emph{at} $t=1,\dots,n$ 
we mean the direct sum of additive groups defined and denoted by 
 \[
    \begin{array}{c}
\begin{pmatrix}
L_{11}  & \cdots & L_{1n} \\
\vdots & \ddots & \vdots \\
L_{n1}  & \cdots & L_{nn} 
\end{pmatrix}
=
\begin{pmatrix}
A_{1} & \cdots & M_{1t} & \cdots & M_{1n} \\
\vdots & \ddots & \vdots& \ddots & \vdots \\
L\otimes_{R}M_{t1} & \cdots & L\otimes_{R}R     & \cdots & L\otimes_{R}M_{tn} \\
\vdots & \ddots & \vdots & \ddots & \vdots \\
M_{n1} & \cdots & M_{nt} & \cdots & A_{n} \\
\end{pmatrix}.
    \end{array}
    \]
That is, $(L_{ij})$ is found by tensoring of each component in row $t$ of $[A_{i};\,M_{ij}]$ with $L$.
\end{defn}
Dually to \Cref{lem-column-excision-bimodule-structure}, we have the following. 
\begin{lem}
\label{lem-row-excision-bimodule-structure}
The row-excision $(L_{ij})$ 
% of $(A_{i};\,M_{ij};\,\varphi_{ikj})$ by  $(R,S;\,N,L;\,\zeta,\theta)$ at $t=1,\dots,n$ 
has the following additional (canonical) structure.
\begin{enumerate}[(i)]
    \item $(L_{ij})$ is a left $[A_{i}';\,M_{ij}']$-module.
    \item $(L_{ij})$ is a right $[A_{i};\,M_{ij}]$-module.
    \item $(L_{ij})$ is an $[A_{i}';\,M_{ij}']$-$[A_{i};\,M_{ij}]$-matrix-bimodule with respect to (i)  and (ii).
\end{enumerate}
\end{lem}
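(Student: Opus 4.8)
The plan is to follow the proof of \Cref{lem-column-excision-bimodule-structure}, interchanging the roles of rows and columns and of left and right actions throughout (equivalently, one may pass to the opposite rings $[A_{i};\,M_{ij}]^{\mathrm{op}}$ and $[A_{i}';\,M_{ij}']^{\mathrm{op}}$, under which the row-excision $(L_{ij})$ becomes the column-excision of the correspondingly transposed data, and invoke \Cref{lem-column-excision-bimodule-structure} directly); below I indicate the direct verification instead. As before, by \S\ref{subsec-morita-context-bimodules} it suffices to exhibit the columns of $(L_{ij})$ as column collections for the composition $(A_{i}';\,M_{ij}';\,\varphi_{ikj}')$ (giving (i)), the rows of $(L_{ij})$ as row collections for $(A_{i};\,M_{ij};\,\varphi_{ikj})$ (giving (ii)), and then to check the fourth compatibility condition (giving (iii)), the third condition of \S\ref{subsec-morita-context-bimodules} holding by construction.

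I would treat the easier part (ii) first. For each $i\neq t$ the $i$-th row of $(L_{ij})$ is literally row $i$ of the regular module for $[A_{i};\,M_{ij}]$, hence a row collection with right structure maps $\varphi_{ikj}$. Only row $t$, namely $(L\otimes_{R}M_{t1},\dots,L\otimes_{R}R,\dots,L\otimes_{R}M_{tn})$, requires work: here the right $[A_{i};\,M_{ij}]$-action is obtained by tensoring the structure maps $\varphi_{tkj}$ on the left by $L$, that is $\myid_{L}\otimes\varphi_{tkj}$ (together with the evident multiplication maps at the corner), and the dual of the fourth condition of \S\ref{subsec-morita-context-modules} drops out of the associativity conditions for $(A_{i};\,M_{ij};\,\varphi_{ikj})$ after tensoring on the left by $L$, exactly as in part (i) of \Cref{lem-column-excision-bimodule-structure}. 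Because this action never touches $N$ or $S$, no use of $\zeta$ or $\theta$ is needed here.

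For part (i) I show each column of $(L_{ij})$ is a column collection for the composition. From the displayed form of $[A_{i}';\,M_{ij}']$, every column $j\neq t$ of $(L_{ij})$ coincides with column $j$ of the regular module for the composition and so inherits the left structure maps $\varphi_{ikj}'$. The remaining column $t$, namely $(M_{1t},\dots,L\otimes_{R}R,\dots,M_{nt})$, is the involved case. Here I define the left structure maps $M_{ik}'\otimes_{A_{k}'}L_{kt}\to L_{it}$ case by case: by $\varphi_{ikt}$ away from row and column $t$, by the left $S$-action in the corner $i=k=t$, and --- in the corner $i\neq t=k$, where $M_{it}'=M_{it}\otimes_{R}N$ acts on $L_{tt}=L\otimes_{R}R$ --- by the $\zeta$-twisted map $\rho_{M_{it}}^{R}\circ(\myid^{M}_{it}\otimes\mu_{R})\circ(\myid^{M}_{it}\otimes\zeta\otimes\myid_{R})\colon M_{it}\otimes N\otimes L\otimes R\to M_{it}$, dual to the right structure map $\beta_{thi}'$ of part (ii) of \Cref{lem-column-excision-bimodule-structure}. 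Verifying the fourth condition of \S\ref{subsec-morita-context-modules} for this column then amounts to the dual of the computation displayed there.

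The main obstacle, as in \Cref{lem-column-excision-bimodule-structure}, is this last verification, and it is settled in the same way: in the corner case the $\theta$-twisted defining homomorphism of $\varphi'$ meets the $\zeta$-twisted structure map, and the two are reconciled by the Morita associativity identities $\zeta(n\otimes l)n'=n\,\theta(l\otimes n')$ and $\theta(l\otimes n)l'=l\,\zeta(n\otimes l')$ together with the bilinearity of $\zeta$ and $\theta$. Finally, for part (iii) it remains only to check that the left and right actions commute; this reduces to cases according to how many of the indices equal $t$, the cases avoiding the twisted corner being immediate from the relations for $\varphi$ and $\varphi'$ and the corner case again following from the Morita identities above, exactly as in part (iii) of \Cref{lem-column-excision-bimodule-structure}.
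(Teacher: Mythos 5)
Your proposal is correct and follows essentially the same route as the paper: the paper likewise decomposes $(L_{ij})$ into rows with right structure maps $\myid_{L}\otimes\varphi_{tkj}$ (resp.\ $\varphi_{ikj}$ for $i\neq t$) and into columns with left structure maps given case-by-case exactly as you describe, and then leaves the verifications to the reader as dual to \Cref{lem-column-excision-bimodule-structure}. The only omission is the case $i=t\neq k$ of the left structure maps on column $t$, where the paper takes $\myid_{L}\otimes\varphi_{tkt}\colon L\otimes_{R}M_{tk}\otimes_{A_{k}}M_{kt}\to L\otimes_{R}R$; this is routine and does not affect the argument.
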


\begin{proof}
% We begin by defining the left and right actions making $(L_{ij})$ an $[A_{i}';\,M_{ij}']$-$[A_{i};\,M_{ij}]$-matrix-bimodule.  
For each $i=1,\dots, n$ define $L_{i\fk{r}}=(L_{i1},\dots,L_{in})$ where $L_{tj}=L\otimes M_{tj}$ and $L_{ij}=M_{ij}$ for $i\neq t$. 
Define the right structure maps $\gamma_{ikj}\colon L_{ik}\otimes_{A_{k}}M_{kj}\to L_{ij}$ by $\gamma_{tkj}=\myid_{L}\otimes \varphi_{tkj}$ and $\gamma_{ikj}= \varphi_{ikj}$ for $i\neq t$. 
For each $j=1,\dots, n$ define $L_{\fk{c}j}=(L_{1j},\dots,L_{nj})$ where  $L_{tj}=L\otimes M_{tj}$ and $L_{ij}=M_{ij}$ for $i\neq t$. 
Hence $X=Y$ which is the additive group $(L_{ij})$. 
Define the left structure maps  $\gamma_{ikj}' \colon M_{ik}'\otimes_{A_{k}} L_{kj}\to L_{ij}$ ($i,j,k=1,\dots,n$) by  $\gamma_{ikj}'=\varphi_{ikj}'$ for $j\neq t$, and otherwise as follows. 
% $\alpha_{hi}\colon M_{hi}\times M_{it}\otimes_{R}N\to M_{ht}\otimes_{R}N$ sends $(m,m'\otimes n)$ to $\varphi_{hit}(m\otimes m')\otimes n$
Define the maps $\gamma_{ikt}' \colon M_{ik}'\otimes_{A_{k}'} L_{kt}\to L_{it}$ ($i,k=1,\dots,n$) by 
\[
\begin{array}{cc}
\lambda_{L}^{S}\otimes\myid_{R}
\colon
S\otimes_{S} L\otimes_{R} R\to L\otimes_{R}R & 
(i=t=k),
\\
\myid_{L}\otimes\varphi_{tkt}\colon
L\otimes_{R} M_{tk}\otimes_{A_{k}} M_{kt}\to L\otimes_{R} R
& 
(i=t\neq k),
\\
\rho_{M_{it}}^{R}\circ (\myid_{it}^{M}\otimes \mu_{R})\circ (\myid_{it}^{M}\otimes \zeta\otimes \myid_{R})\colon 
M_{it}\otimes_{R}N \otimes_{S} L\otimes_{R} R\to M_{it}
& 
(i\neq t=k),
\\
\varphi_{ikt}\colon 
M_{ik}\otimes_{A_{k}} M_{kt}\to M_{it}
&
(i\neq t\neq k).
\end{array}
\]
We leave the verification of (i)--(iii) to the reader, since such  arguments are similar to those used in the proof of \Cref{lem-column-excision-bimodule-structure}. 
Note that $(L_{ij})=\bigoplus_{i}L_{i\fk{r}}$ as right $[A_{i};\,M_{ij}]$-modules and $(L_{ij})=\bigoplus_{j}L_{\fk{c}j}$ as left $[A_{i}';\,M_{ij}']$-modules. 
\end{proof}

\subsection{Ligation}
\label{sec-ligation}

By Lemmas \ref{lem-column-excision-bimodule-structure} and \ref{lem-row-excision-bimodule-structure} the column-excision $(N_{ij})$ and the row-excision $(L_{ij})$  define matrix bimodules over the generalised matrix rings $[A_{i};\,M_{ij}]$ and $[A_{i}';\,M_{ij}']$ of the generalised Morita contexts $(A_{i};\,M_{ij};\,\varphi_{ikj})$ and $(A_{i}';\,M_{ij}';\,\varphi_{ikj}')$. 

From now on we relabel using the notation $N_{\fk{m}}=(N_{ij})$ and  $L_{\fk{m}}=(L_{ij})$ to keep track of the fact these additive groups are matrix bimodules.  

\begin{defn}
\label{defn-column-row-ligation}
The \emph{column}-\emph{row ligation}  is the biadditive map $\left\vert
\alpha_{ikj}
\right\vert$ of the form
\[
\begin{array}{c}
\begin{vmatrix}
 \alpha_{1k1} & \cdots & \alpha_{1kn} \\
\vdots & \ddots & \vdots \\
\alpha_{nk1} & \cdots & \alpha_{nkn}
\end{vmatrix}
\colon
\begin{pmatrix}
N_{11} & \cdots & N_{1n} \\
\vdots & \ddots & \vdots \\
N_{n1} & \cdots & N_{nn}
\end{pmatrix}
\times 
\begin{pmatrix}
L_{11} & \cdots & L_{1n} \\
\vdots & \ddots & \vdots \\
L_{n1} & \cdots & L_{nn}
\end{pmatrix}
\to 
\begin{pmatrix}
A_{1} & \cdots & M_{1n} \\
\vdots & \ddots & \vdots \\
M_{n1} & \cdots & A_{n}
\end{pmatrix},
% \\
% \left\vert
% \alpha_{ikj}'
% \right\vert
% \colon
% \begin{pmatrix}
% L_{11} & \cdots & L_{1n} \\
% \vdots & \ddots & \vdots \\
% L_{n1} & \cdots & L_{nn}
% \end{pmatrix}
% \times \begin{pmatrix}
% N_{11} & \cdots & N_{1n} \\
% \vdots & \ddots & \vdots \\
% N_{n1} & \cdots & N_{nn}
% \end{pmatrix}
% \to 
% \begin{pmatrix}
% A_{1}' & \cdots & M_{1n}' \\
% \vdots & \ddots & \vdots \\
% M_{n1}' & \cdots & A_{n}'
% \end{pmatrix},
\end{array}
\]
given by the $A_{i}$-$A_{j}$-bimodule homomorphisms $\alpha_{ikj}\colon N_{ik}\otimes_{A_{k}'} L_{kj}\to M_{ij}$ defined by
\[
\begin{array}{cc}
\varphi_{ikj}\colon M_{ik}\otimes_{A_{k}} M_{kj}\to M_{ij}   &  (k\neq t),\\
\varphi_{itj}\circ(\rho_{M_{it}}^{R}\otimes \myid_{tj}^{M})\circ(\myid_{it}^{M} \otimes\zeta \otimes\myid_{tj}^{M} )\colon M_{it}\otimes_{R} N\otimes_{S} L\otimes_{R} M_{tj}\to M_{ij}  
 &  (k=t).
\end{array}
\]
\end{defn}

% We set up some more notation in what follows. 
% Let $N_{ij}$ and $L_{ij}$ bet the additive groups defined by  $(N_{ij})=(N_{ij})$ and $(L_{ij})=(L_{ij})$.  
% Hence we have 
% \[
% \begin{array}{c}
% \left\vert
% \alpha_{ikj}
% \right\vert
% \colon
% \begin{pmatrix}
% N_{11} & \cdots & N_{1n} \\
% \vdots & \ddots & \vdots \\
% N_{n1} & \cdots & N_{nn}
% \end{pmatrix}
% \times 
% \begin{pmatrix}
% L_{11} & \cdots & L_{1n} \\
% \vdots & \ddots & \vdots \\
% L_{n1} & \cdots & L_{nn}
% \end{pmatrix}
% \to 
% \begin{pmatrix}
% A_{1} & \cdots & M_{1n} \\
% \vdots & \ddots & \vdots \\
% M_{n1} & \cdots & A_{n}
% \end{pmatrix}
% \\
% \left\vert
% \alpha_{ikj}'
% \right\vert
% \colon
% \begin{pmatrix}
% L_{11} & \cdots & L_{1n} \\
% \vdots & \ddots & \vdots \\
% L_{n1} & \cdots & L_{nn}
% \end{pmatrix}
% \times \begin{pmatrix}
% N_{11} & \cdots & N_{1n} \\
% \vdots & \ddots & \vdots \\
% N_{n1} & \cdots & N_{nn}
% \end{pmatrix}
% \to 
% \begin{pmatrix}
% A_{1}' & \cdots & M_{1n}' \\
% \vdots & \ddots & \vdots \\
% M_{n1}' & \cdots & A_{n}'
% \end{pmatrix}
% \end{array}
% \]
% In \S\ref{sec-ligation} we establish biadditive maps  and $L_{\fk{m}}\times N_{\fk{m}}\to [A_{i}';\,M_{ij}']$. 

% DEFINE THEM BELOW, THE COLUMN-ROW LIGATION AND THE ROW-COLUM LIGATION
% THIS IS WHERE THE BIG MATRICES ABOVE ARE DISPLAYED

In what follows we show that the column-row ligation is balanced, and hence factors through the corresponding tensor product; see Lemma  \ref{lem-first-bilinear-forms-between-excised-balanced}.
% and \ref{lem-second-bilinear-forms-between-excised-balanced}. 

\begin{lem}
\label{lem-first-bilinear-forms-between-excised-balanced}
The column-row-ligation $\left\vert\alpha_{ikj}\right\vert\colon N_{\fk{m}}\times L_{\fk{m}}\to [A_{i};\,M_{ij}]$ is $[A_{i}';\,M_{ij}']$-balanced. 
\end{lem}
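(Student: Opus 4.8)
I want to show that the column-row ligation
\[
\left\vert\alpha_{ikj}\right\vert\colon N_{\fk{m}}\times L_{\fk{m}}\to [A_{i};\,M_{ij}]
\]
is $[A_{i}';\,M_{ij}']$-balanced. Following the formalism of \S\ref{subsec-morita-context-balanced-maps-of-modules}, the map $\left\vert\alpha_{ikj}\right\vert$ is built from the right $[A_{i}';\,M_{ij}']$-module structure on $N_{\fk{m}}$ (given by the structure maps $\beta_{lkj}'$ from \Cref{lem-column-excision-bimodule-structure}(ii)) and the left $[A_{i}';\,M_{ij}']$-module structure on $L_{\fk{m}}$ (given by the structure maps $\gamma_{ilk}'$ from \Cref{lem-row-excision-bimodule-structure}(i)). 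Being balanced means exactly the associativity identity
\[
\alpha_{ihj}(\myid\otimes \gamma_{hkj}')=\alpha_{ikj}(\beta_{ihk}'\otimes \myid)
\]
as maps $N_{ih}\otimes M_{hk}'\otimes L_{kj}\to M_{ij}$, for all $h,i,j,k=1,\dots,n$. So the entire proof reduces to verifying this single family of equations, which is precisely the third condition from \S\ref{subsec-morita-context-balanced-maps-of-modules} adapted to our indices.

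\emph{Case analysis.} The verification splits according to which of $h,i,j,k$ equal the distinguished index $t$, since the definitions of $\alpha_{ikj}$, $\beta_{lkj}'$ and $\gamma_{ilk}'$ all bifurcate at $t$. The first step is to dispose of the cases where $t\notin\{h,k\}$: here all three maps reduce to the original $\varphi$'s, and the identity is immediate from the mixed associativity of the generalised Morita context $(A_{i};\,M_{ij};\,\varphi_{ikj})$. Next I would handle the case $h=t$ (but $k\neq t$) and the case $k=t$ (but $h\neq t$) separately, and finally the case $h=k=t$. In each of these, one side of the identity involves the ``twisted'' map built from $\zeta$ (when $k=t$) or the $S$-action relayed through $N$ and $L$ (when $h=t$), so the key algebraic input will be the associativity conditions of the classical Morita context $(R,S;\,N,L;\,\zeta,\theta)$ together with the linearity of $\varphi_{itj}$ and the compatibility $\rho_{M_{it}}^{R}$ and $\lambda_{M_{tj}}^{R}$ already exploited in the proof of \Cref{lem-column-excision-bimodule-structure}.

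\emph{The main obstacle.} The delicate case is $h=k=t$, where both the left action of $M_{tt}'=S$ on $L_{tj}$ and the right action of $M_{tt}'=S$ on $N_{it}$ interact with the $\zeta$-twisting inside $\alpha_{itj}$. Tracing an element $m\otimes n\otimes s\otimes \ell\otimes m'$ (with $m\in M_{it}$, $n\in N$, $s\in S$, $\ell\in L$, $m'\in M_{tj}$) through both composites, one must check that applying $s$ via $\rho_N^S$ on the $N$-factor and then contracting with $\zeta$ agrees with contracting first and applying $s$ via $\lambda_L^S$ on the $L$-factor; this is where the $S$-bilinearity of $\zeta\colon N\otimes L\to R$ and the defining associativity $\zeta(n\otimes \ell)\cdot\text{(-)}$ of the context $(R,S;\,N,L;\,\zeta,\theta)$ do the work. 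I expect this to be the only genuinely nontrivial computation; the remaining subcases follow by the same pattern or are more straightforward, so I would present $h=k=t$ in full and indicate that the others are analogous.
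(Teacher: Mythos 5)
Your reduction of the lemma to the family of equations $\alpha_{ihj}(\myid\otimes \gamma_{hkj}')=\alpha_{ikj}(\beta_{ihk}'\otimes \myid)$ on $N_{ih}\otimes M_{hk}'\otimes L_{kj}$, and your plan to argue by cases on which indices equal $t$, is exactly the paper's strategy, and your dispatch of the case $t\notin\{h,k\}$ via the mixed associativity of $(A_{i};\,M_{ij};\,\varphi_{ikj})$ is correct. The problem is where you locate the content. The case $h=k=t$ that you single out as ``the only genuinely nontrivial computation'' is in fact essentially automatic: $\alpha_{itj}$ is by definition a map out of $N_{it}\otimes_{S}L_{tj}$, the two structure maps in question are just the right and left $S$-actions $\myid\otimes\rho_{N}^{S}$ and $\lambda_{L}^{S}\otimes\myid$, and the identity $\zeta(ns\otimes\ell)=\zeta(n\otimes s\ell)$ you invoke is the $S$-balancedness already built into the classical context. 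The paper accordingly dismisses $h=t=k$ (together with $i=t=j$) as straightforward.

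The cases that actually carry the content are precisely the ones you propose to wave away as analogous: those where exactly one of $\{h,k\}$ equals $t$ \emph{and} exactly one of $\{i,j\}$ equals $t$ (four cases: $h,i=t\neq j,k$; $h,j=t\neq i,k$; $k,i=t\neq h,j$; $k,j=t\neq h,i$). Note also that your case split on $(h,k)$ alone is too coarse: the maps $\gamma'_{hkj}$, $\beta'_{ihk}$ and the target $M_{ij}$ all branch on $i$ and $j$ as well, so ``$h=t$, $k\neq t$'' is not one case but four. In the mixed cases one must chase a pure tensor such as $r\otimes n\otimes l\otimes m\otimes m'$ through both composites and check that each yields $r\zeta(n\otimes l)\varphi_{tkj}(m\otimes m')$ (respectively $m\,\zeta(n\otimes l)\varphi_{tkt}(m'\otimes m'')$ in the case $h,j=t\neq i,k$); the inputs are the $R$-bilinearity of $\zeta$, the $R$-linearity of the $\varphi$'s, and identities such as $\varphi_{ikt}(\varphi_{itk}\otimes\myid)=\rho^{R}_{M_{it}}(\myid\otimes\varphi_{tkt})$ --- not the $S$-balancedness that settles $h=k=t$, so ``analogous'' does not apply. (The cases with $i\neq t\neq j$ can be handled at a stroke by observing that all maps involved are then the $\varphi'$'s, so the equation is the mixed associativity of the composed context, i.e.\ \Cref{lem-extension-is-a-Morita-context-associativity}.) As written, your plan would present in full only the case that needs no work and omit verification of the ones that do.
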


\begin{proof}
Recall the right structure maps $\beta_{ihk}'\colon N_{ih}\otimes_{A_{h}'} M_{hk}'\to N_{ik}$ from \Cref{lem-column-excision-bimodule-structure}(ii) and the left structure maps $\gamma_{ijl}'\colon M_{ij}'\otimes_{A_{j}'}L_{jl}\to L_{il}$ from \Cref{lem-row-excision-bimodule-structure}(i). 
Considering the final condition from \ref{subsec-morita-context-balanced-maps-of-modules}, to show the given map is balanced we require that $\alpha_{ihj}(\myid_{ih}^{N} \otimes \gamma_{hkj}')
=\alpha_{ikj}(\beta_{ihk}'\otimes \myid_{kj}^{L})$ ($h,i,j,k=1,\dots,n$) as maps $N_{ih}\otimes M_{hk}'\otimes L_{kj}\to M_{ij}$. 
As was done in the proofs of the aforementioned lemmas, we break our claim down into cases. 

When $h\neq t\neq k$ the required equation follows from the mixed associativity conditions held by the generalised Morita context $(A_{i};\,M_{ij};\,\varphi_{ikj})$. 
When $i\neq t\neq j$ we can similarly apply the conditions held by the context $(A_{i}';\,M_{ij}';\,\varphi_{ikj}')$, which hold by  \Cref{lem-extension-is-a-Morita-context-associativity}. 
When $h=t=k$ or $i=t=j$ the required equation is straightforward. There are four cases left: when $h,i=t\neq j,k$; when $h,j=t\neq i,k$; when $k,i=t\neq h,j$; and when $k,j=t\neq h,i$. 
We show the required equation holds in the first two cases, since the arguments for the other two cases are similar. 
So suppose  $h,i=t\neq j,k$. 
The required equation then becomes
\[
(\lambda_{M_{tj}}^{R}(\mu_{R}\otimes \myid)(\myid_{R} \otimes\zeta \otimes\myid ))
(
\myid\otimes \myid
\otimes 
\myid\otimes \varphi_{tkj}
)
=\varphi_{tkj}
(
(\lambda_{M_{tk}}^{R}(\mu_{R}\otimes \myid)(\myid\otimes \zeta\otimes \myid))
\otimes
\myid
)
\]
considered as maps of the form $R\otimes N\otimes L\otimes M_{tk}\otimes M_{kj}\to M_{tj}$. 
The proof from here is straightforward, since it can be shown that both sides send any pure tensor $r\otimes n\otimes l\otimes m\otimes m'$ to $r\zeta(n\otimes l)\varphi_{tkj}( m\otimes m')$ using the linearity of the involved homomorphisms. 

Suppose instead $h,j=t\neq i,k$. 
This time the required equation becomes 
\[
\rho_{M_{it}}^{R}(\rho_{M_{it}}^{R}\otimes\myid)(\myid \otimes\zeta \otimes\myid )
(
\myid\otimes\myid
\otimes 
\myid\otimes\varphi_{tkt}
)
=\varphi_{ikt}
(
(\varphi_{itk}(\rho_{M_{it}}^{R}\otimes \myid)(\myid \otimes\zeta \otimes\myid ))
\otimes
\myid
)
\] 
considered as maps of the form $M_{it}\otimes N\otimes L\otimes M_{tk}\otimes M_{kt}\to M_{it}$. 
In this case one can show both sides send a pure tensor $m\otimes n\otimes l\otimes m'\otimes m''$ to $m \zeta(n\otimes l)\varphi_{tkt}(m'\otimes m'')$,  using that $\varphi_{ikt}(\varphi_{itk}\otimes \myid)=\rho_{M_{it}}^{R}(\myid\otimes \varphi_{tkt})$ by the associativity conditions for $(A_{i};\,M_{ij};\,\varphi_{ikj})$. 
\end{proof}

Note that the bimodule $M_{ii}$ is unital if and only if the map $\mu_{A_{i}}\colon A\otimes A\to A$ is surjective. 

\begin{lem}
\label{lem-column-row-ligation-gives-bimodule-homomorphism-with-surjectivity-criteria}
The canonical additive map $\left\vert\underline{\alpha}_{ikj}\right\vert\colon N_{\fk{m}}\otimes_{[A_{i}';\,M_{ij}']} L_{\fk{m}}\to [A_{i};\,M_{ij}]$, defined by the column-row ligation, is an $[A_{i};\,M_{ij}]$-$[A_{i};\,M_{ij}]$-bimodule homomorphism. 
Furthermore, $\left\vert\underline{\alpha}_{ikj}\right\vert$ is surjective provided $\zeta$ is surjective and each $A_{i}$-$A_{j}$-bimodule $M_{ij}$ is unital. 
\end{lem}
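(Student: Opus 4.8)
The plan is to check the two conditions from \S\ref{subsec-morita-context-associativty-of-balanced-maps} that, together with \Cref{lem-first-bilinear-forms-between-excised-balanced}, force $\left\vert\alpha_{ikj}\right\vert$ — and hence its factorisation $\left\vert\underline{\alpha}_{ikj}\right\vert$ through $N_{\fk{m}}\otimes_{[A_{i}';\,M_{ij}']}L_{\fk{m}}$ — to be an $[A_{i};\,M_{ij}]$-$[A_{i};\,M_{ij}]$-bimodule homomorphism. The column-row ligation is already $[A_{i}';\,M_{ij}']$-balanced by \Cref{lem-first-bilinear-forms-between-excised-balanced}, so it remains to see that it is left and right $[A_{i};\,M_{ij}]$-linear. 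Writing $\beta_{ilk}\colon M_{il}\otimes N_{lk}\to N_{ik}$ for the left structure maps of $N_{\fk{m}}$ (\Cref{lem-column-excision-bimodule-structure}) and $\gamma_{kmj}\colon L_{km}\otimes M_{mj}\to L_{kj}$ for the right structure maps of $L_{\fk{m}}$ (\Cref{lem-row-excision-bimodule-structure}), these two linearity requirements are the identities
\[
\alpha_{ikj}(\beta_{ilk}\otimes\myid)=\varphi_{ilj}(\myid\otimes\alpha_{lkj}),
\qquad
\alpha_{ikj}(\myid\otimes\gamma_{kmj})=\varphi_{imj}(\alpha_{ikm}\otimes\myid),
\]
for all relevant indices, viewed as maps $M_{il}\otimes N_{lk}\otimes L_{kj}\to M_{ij}$ and $N_{ik}\otimes L_{km}\otimes M_{mj}\to M_{ij}$ respectively.

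I would verify both identities by the same case analysis deployed in \Cref{lem-column-excision-bimodule-structure,lem-row-excision-bimodule-structure,lem-first-bilinear-forms-between-excised-balanced}, organised by which of the indices coincide with $t$. When the summation index and the two outer indices all avoid $t$, each identity collapses to the mixed associativity of $(A_{i};\,M_{ij};\,\varphi_{ikj})$. The cases touching $t$ are handled using the explicit structure maps of $N_{\fk{m}}$ and $L_{\fk{m}}$ together with the bilinearity of $\zeta,\theta$ and the associativity conditions of the classical context $(R,S;\,N,L;\,\zeta,\theta)$; the delicate cases are precisely those with summation index $k=t$, where $\alpha_{itj}$ unfolds into the composite $\varphi_{itj}\circ(\rho_{M_{it}}^{R}\otimes\myid)\circ(\myid\otimes\zeta\otimes\myid)$ and one must commute $\zeta$ past $\varphi_{itj}$ exactly as in the displayed computation of \Cref{lem-first-bilinear-forms-between-excised-balanced}. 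As these run parallel to the earlier lemmas, I would exhibit one representative $k=t$ case in full and omit the rest.

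For surjectivity I would argue entry by entry, computing the image of $\left\vert\underline{\alpha}_{ikj}\right\vert$ in each component $M_{ij}$. For $i\neq t$ the summand $k=i$ contributes $\alpha_{iij}=\varphi_{iij}=\lambda^{A_{i}}_{M_{ij}}$ on $A_{i}\otimes_{A_{i}}M_{ij}$, which is onto because $M_{ij}$ is a unital left $A_{i}$-module; hence every entry outside row $t$ is already hit. For an entry $M_{tj}$ in row $t$ I would use the summand $k=t$, where $\alpha_{ttj}$ sends $r\otimes n\otimes l\otimes m$ to $r\zeta(n\otimes l)m$. Since $\zeta$ is surjective, $\zeta(N\otimes L)$ generates $R=A_{t}$, while $M_{tj}$ is a unital left $R$-module (and $\mu_{R}$ is onto precisely because $M_{tt}=A_{t}$ is unital, by the remark preceding the statement); thus any element of $M_{tj}$ equals $\sum_{b}s_{b}m_{b}$ with $s_{b}\in R$, and writing $s_{b}=\sum_{c}\zeta(n_{bc}\otimes l_{bc})$ and absorbing a local unit $r$ with $rn=n$, so that $r\zeta(n\otimes l)=\zeta(rn\otimes l)=\zeta(n\otimes l)$ by left $R$-linearity of $\zeta$, exhibits the element in the image. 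Hence $\left\vert\underline{\alpha}_{ikj}\right\vert$ is surjective.

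The main obstacle is the bimodule-homomorphism verification: it is a lengthy multi-case associativity check, and the only genuinely non-formal cases are those with summation index $t$, where the defining composite of $\alpha_{itj}$ forces one to transport the pairing $\zeta\colon N\otimes L\to R$ across $\varphi_{itj}$ via the associativity of the classical context. By contrast the surjectivity claim is comparatively routine, the sole subtlety being the local-units bookkeeping required to supply the left factor $r$ in the non-unital setting.
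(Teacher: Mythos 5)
Your argument for the bimodule-homomorphism part coincides with the paper's: the same two linearity identities, the same case split on which indices equal $t$, with mixed associativity disposing of the cases away from $t$ and the unfolded composite involving $\zeta$ handling the summation-index-$t$ cases. The one divergence is in the surjectivity argument for the row-$t$ entries $M_{tj}$ with $j\neq t$: the paper simply chooses the summand $k=j$, where $\alpha_{tjj}=\varphi_{tjj}=\rho^{A_{j}}_{M_{tj}}$ is onto by unitality alone, and reserves the $\zeta$-based argument for the single $(t,t)$ entry; you instead use $k=t$ for the whole row and then invoke a local unit $r$ with $rn=n$. That step is both unnecessary and not covered by the lemma's hypotheses (the lemma assumes only that $\zeta$ is surjective and the $M_{ij}$ are unital --- neither local units on $R$ nor unitality of $N$ is assumed here; those appear only in \Cref{cor-main-theorem-about-contexts-restricts-to-equivalences}). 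The repair is immediate and stays within your chosen summand: the image of $\alpha_{ttj}$ is $R\,\zeta(N\otimes L)\,M_{tj}=R\cdot R\cdot M_{tj}$, and applying $RM_{tj}=M_{tj}$ twice gives all of $M_{tj}$, so no local unit is needed. With that adjustment your proof is correct and otherwise matches the paper's.
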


\begin{proof} 
Our first assertion is that $\left\vert\underline{\alpha}_{ikj}\right\vert$ is a left $[A_{i};\,M_{ij}]$-module homomorphism. 
Recall the left structure maps $\beta_{ihk}\colon M_{ih}\otimes_{A_{h}} N_{hk}\to N_{ik}$ from \Cref{lem-column-excision-bimodule-structure}(i).  
As in \S\ref{subsec-morita-context-homomorphisms-of-modules}, for our first assertion we will show that $\alpha_{ikj}(\beta_{ihk}\otimes \myid_{kj}^{L})=\varphi_{ihj}(\myid_{ih}^{M}\otimes \alpha_{ikj})$ for all $h,i,j,k=1,\dots,n$. 

When $k\neq t$ this equality follows from the mixed associativity conditions held by $(A_{i};\,M_{ij};\,\varphi_{ikj})$. 
When $k= t$, the required equality becomes 
\[
\varphi_{itj}(\rho_{M_{it}}^{R}\otimes \myid_{tj}^{M})(\myid_{it}^{M}\otimes \zeta\otimes \myid_{tj}^{M})(\varphi_{iht}\otimes\myid_{N}\otimes\myid_{L}\otimes\myid_{tj}^{M})
=
\varphi_{ihj}(\myid_{ih}^{M}\otimes \varphi_{htj}(\rho_{M_{ht}}^{M}\otimes \myid_{tj}^{M})(\myid_{ht}^{M}\otimes\zeta\otimes\myid_{tj}^{M}))
\]
which follows from the equality $\varphi_{ihj}(\myid_{it}^{M}\otimes\varphi_{htj})=\varphi_{itj}(\varphi_{iht}\otimes \myid_{tj}^{M})$, which as above is just one of the mixed associativity conditions held by $(A_{i};\,M_{ij};\,\varphi_{ikj})$. 

Our second assertion is that $\left\vert\underline{\alpha}_{ikj}\right\vert$ is a right $[A_{i};\,M_{ij}]$-module homomorphism.  
So instead consider the right structure maps $\gamma_{ikj}\colon L_{ik}\otimes_{A_{k}}M_{kj}\to L_{ij}$ from \Cref{lem-row-excision-bimodule-structure}(ii). 
For our second assertion we show that $\alpha_{ihj}(\myid_{ih}^{N}\otimes \gamma_{hkj})=\varphi_{ikj}(\alpha_{ihk}\otimes \myid_{kj}^{M})$ for all $h,i,j,k$.

As above, the proof that this equality holds is straightforward. 
In case $h=t$ the equation follows automatically as a consequence of the mixed associativity conditions for $(A_{i};\,M_{ij};\,\varphi_{ikj})$. 
In case $h\neq t$, it follows from the equation $\varphi_{itj}(\myid_{it}^{M}\otimes \varphi_{tkj})=\varphi_{ikj}(\varphi_{itk}\otimes \myid^{M}_{kj})$. 

For the final claim in the statement, it suffices to fix $i$ and $j$ arbitrary and choose some $k$ such that the map $\alpha_{ikj}\colon N_{ik}\otimes_{A_{k}'}L_{kj}\to M_{ij}$ is surjective.

Since the $A_{i}$-$A_{j}$ bimodules $M_{ij}$ are unital the maps $\rho_{M_{ik}}^{A_{k}}=\varphi_{ikk}$  and $\lambda_{M_{kj}}^{A_{k}}=\varphi_{kkj}$ are surjective. 
When $k\neq t$ we have $\alpha_{ikj}=\varphi_{ikj}$. 
Hence if $i\neq t$ then  it suffices to choose $k=i$, and if $j\neq t$ it suffices to choose $k=j$. 
So we can assume $i=t=j$, and we choose $k=t$, so that $\alpha_{ikj}$ is the composition of $\varphi_{ttt}$, $\rho_{M_{tt}}^{R}\otimes \myid$ and $\myid\otimes\zeta\otimes\myid$. 
As above, $\varphi_{ttt}=\mu_{R}=\rho_{M_{tt}}^{R}$ is surjective, and $\zeta$ is surjective by assumption. 
\end{proof}

Later we use \Cref{lem-column-row-ligation-gives-bimodule-homomorphism-with-surjectivity-criteria}, together with results from discussed in the sequel, in conjunction with \Cref{thm-recalled-from-Anh-Marki}. 
We start with a dual construction to the column-row ligation from Definition \ref{defn-column-row-ligation}, completing the ingredients needed to state \Cref{thm-induced-contexts-for-generalised-matrix-rings}.

\begin{defn}
\label{row-column-ligation}
The \emph{row}-\emph{column ligation} is the biadditive map $\left\vert
\alpha_{ikj}'
\right\vert$ of the form
\[
\begin{array}{c}
\begin{vmatrix}
 \alpha_{1k1}' & \cdots & \alpha_{1kn}' \\
\vdots & \ddots & \vdots \\
\alpha_{nk1}' & \cdots & \alpha_{nkn}'
\end{vmatrix}
\colon
\begin{pmatrix}
L_{11} & \cdots & L_{1n} \\
\vdots & \ddots & \vdots \\
L_{n1} & \cdots & L_{nn}
\end{pmatrix}
\times \begin{pmatrix}
N_{11} & \cdots & N_{1n} \\
\vdots & \ddots & \vdots \\
N_{n1} & \cdots & N_{nn}
\end{pmatrix}
\to 
\begin{pmatrix}
A_{1}' & \cdots & M_{1n}' \\
\vdots & \ddots & \vdots \\
M_{n1}' & \cdots & A_{n}'
\end{pmatrix},
\end{array}
\]
given by the $A_{i}'$-$A_{j}'$-bimodule homomorphisms $\alpha_{ikj}'\colon L_{ik}\otimes_{A_{k}} N_{kj}\to M_{ij}'$ defined by 
\[
\begin{array}{cc}
\varphi_{ikj}\colon 
M_{ik}\otimes_{A_{k}} M_{kj}\to M_{ij}
% &
% (m,m')\mapsto (m\otimes m')   
&  
(i\neq t\neq j),
\\
\varphi_{ikt}\otimes\myid_{N} \colon M_{ik}\otimes_{A_{k}} M_{kt}\otimes_{R}N\to M_{it}\otimes_{R}N
% &
% (m,m'\otimes n)\mapsto (m\otimes m')\otimes n   
&  
(i\neq t=j),
\\
\myid_{L}\otimes\varphi_{tkj} \colon L\otimes_{R}M_{tk}\otimes_{A_{k}} M_{kj}\to L\otimes_{R}M_{tj}
% &
% (l\otimes m,m')\mapsto l\otimes\varphi_{tkj}(m\otimes m')   
&  
(i=t\neq j),
\\
\theta\circ(\rho_{L}^{R}\otimes \myid_{N})\circ(\myid_{L}\otimes \varphi_{tkt}\otimes \myid_{N})\colon L\otimes_{R}M_{tk}\otimes_{A_{k}} M_{kt}\otimes_{R}N\to S 
% &  (l\otimes m,m'\otimes n)\mapsto \varphi_{tkt}'(l\otimes m\otimes m'\otimes n) 
&  (i=t=j).
\end{array}
\]
\end{defn}

\begin{lem}
\label{lem-second-bilinear-forms-between-excised-balanced}
The row-column ligation $\left\vert
\alpha_{ikj}'
\right\vert\colon L_{\fk{m}}\times N_{\fk{m}}\to [A_{i}';\,M_{ij}']$ is $[A_{i};\,M_{ij}]$-balanced.   
\end{lem}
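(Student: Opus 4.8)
The plan is to verify the single defining condition for $[A_{i};\,M_{ij}]$-balancedness recorded in \S\ref{subsec-morita-context-balanced-maps-of-modules}. Here $L_{\fk{m}}$ occupies the left factor of the pairing and so carries its right $[A_{i};\,M_{ij}]$-structure from \Cref{lem-row-excision-bimodule-structure}(ii), with maps $\gamma_{ihk}\colon L_{ih}\otimes_{A_{h}}M_{hk}\to L_{ik}$, while $N_{\fk{m}}$ occupies the right factor and carries its left $[A_{i};\,M_{ij}]$-structure from \Cref{lem-column-excision-bimodule-structure}(i), with maps $\beta_{hkj}\colon M_{hk}\otimes_{A_{k}}N_{kj}\to N_{hj}$. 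Thus it suffices to show
\[
\alpha_{ihj}'(\myid_{ih}^{L}\otimes \beta_{hkj})=\alpha_{ikj}'(\gamma_{ihk}\otimes \myid_{kj}^{N})
\]
as maps $L_{ih}\otimes M_{hk}\otimes N_{kj}\to M_{ij}'$ for all $h,i,j,k=1,\dots,n$; this is exactly dual to the identity established in the proof of \Cref{lem-first-bilinear-forms-between-excised-balanced}.

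First I would observe that both sides share the outer indices $i$ and $j$, so $\alpha_{ihj}'$ and $\alpha_{ikj}'$ are of the same type and the natural case division is according to whether $i=t$ and whether $j=t$, the contraction indices $h,k$ playing no role in the branching of \Cref{row-column-ligation}. When $i\neq t\neq j$ the displayed identity is precisely a mixed associativity condition of $(A_{i};\,M_{ij};\,\varphi_{ikj})$. When exactly one of $i,j$ equals $t$ the ligation maps are $\varphi_{ikt}\otimes\myid_{N}$ or $\myid_{L}\otimes\varphi_{tkj}$, so the same mixed associativity condition applies after stripping the identity tensor factor ($\myid_{N}$ on the right, or $\myid_{L}$ on the left). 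The degenerate cases $h=t=k$ and $i=t=j$ together with $h,k=t$ are immediate.

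The only genuinely involved case is $i=t=j$, where both $\alpha_{tht}'$ and $\alpha_{tkt}'$ carry the $\theta\circ(\rho_{L}^{R}\otimes \myid_{N})$-wrapper of \Cref{row-column-ligation}. On a pure tensor $l\otimes m\otimes m'\otimes m''\otimes n$ in $L\otimes M_{th}\otimes M_{hk}\otimes M_{kt}\otimes N$ I would compute that both sides equal $\theta\bigl((l\cdot\varphi_{tht}(m\otimes \varphi_{hkt}(m'\otimes m'')))\otimes n\bigr)$, the interior equality $\varphi_{tht}(m\otimes \varphi_{hkt}(m'\otimes m''))=\varphi_{tkt}(\varphi_{thk}(m\otimes m')\otimes m'')$ being the mixed associativity condition of $(A_{i};\,M_{ij};\,\varphi_{ikj})$ at the indices $t,h,k,t$. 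The point is that the outer wrapper appears identically on the two sides, so no property of the classical context $(R,S;\,N,L;\,\zeta,\theta)$ beyond the bimodule-linearity of the relevant $\varphi_{t h t}$ is needed; the sub-cases $h=t$ or $k=t$ collapse using $\varphi_{ttk}=\lambda_{M_{tk}}^{R}$, $\varphi_{ttt}=\mu_{R}$ and the left and right $R$-linearity of the maps involved.

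The main obstacle is therefore purely organisational: confirming uniformly across the sub-cases that the $\theta$-, $\rho_{L}^{R}$- and identity-tensor wrappers coincide on the two sides, so that every instance of the displayed identity collapses to the mixed associativity already available from $(A_{i};\,M_{ij};\,\varphi_{ikj})$. As an alternative I note that the entire statement is formally dual to \Cref{lem-first-bilinear-forms-between-excised-balanced} under passage to opposite rings, interchanging $N\leftrightarrow L$, $\zeta\leftrightarrow\theta$ and the column- and row-excisions, so one may instead deduce it directly from that lemma.
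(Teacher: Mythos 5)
Your proposal is correct and follows essentially the same route as the paper: it reduces to the identity $\alpha_{ihj}'(\myid_{ih}^{L}\otimes \beta_{hkj})=\alpha_{ikj}'(\gamma_{ihk}\otimes \myid_{kj}^{N})$ and observes that every case collapses to the mixed associativity conditions of $(A_{i};\,M_{ij};\,\varphi_{ikj})$ alone, which is precisely what the paper asserts. Your explicit observation that the branching of \Cref{row-column-ligation} depends only on whether $i=t$ and $j=t$ (so the outer wrappers agree on both sides) is a clean way of supplying the details the paper leaves to the reader.
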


\begin{proof}
We follow the strategy from the proof of \Cref{lem-first-bilinear-forms-between-excised-balanced}. 
Recall the left structure maps $\beta_{hkj} \colon M_{hk}\otimes_{A_{k}} N_{kj}\to N_{hj}$
% DEFINED by $\beta_{hkt}=\varphi_{hkt}\otimes \myid_{N}$ and $\beta_{hkj}=\varphi_{hkj}$ for $j\neq t$. 
from \Cref{lem-column-excision-bimodule-structure}(i), 
and the right structure maps $\gamma_{ihk}\colon L_{ih}\otimes_{A_{h}}M_{hk}\to L_{ik}$ 
% DEFINED by $\gamma_{thk}=\myid_{L}\otimes \varphi_{thk}$ and $\gamma_{ihk}= \varphi_{ihk}$ for $i\neq t$.  
from \Cref{lem-row-excision-bimodule-structure}(ii). 
As in \Cref{lem-first-bilinear-forms-between-excised-balanced}, but dually, it suffices check that $\alpha_{ihj}'(\myid_{ih}^{L} \otimes \beta_{hkj})
=\alpha_{ikj}'(\gamma_{ihk}\otimes \myid_{kj}^{N})$ as maps $L_{ih}\otimes M_{hk}\otimes N_{kj}\to M_{ij}'$. 
To do so is particularly straightforward, and only uses the associativity conditions for $(A_{i};\,M_{ij};\,\varphi_{ikj})$.
\end{proof}

\begin{lem}
\label{lem-row-column-ligation-gives-bimodule-homomorphism-with-surjectivity-criteria}
The canonical additive map $\left\vert\underline{\alpha}_{ikj}'\right\vert\colon L_{\fk{m}}\otimes_{[A_{i};\,M_{ij}]} N_{\fk{m}}\to [A_{i}';\,M_{ij}']$, defined by the row-column ligation, is an $[A_{i}';\,M_{ij}']$-$[A_{i}';\,M_{ij}']$-bimodule homomorphism. 
Furthermore, $\left\vert\underline{\alpha}_{ikj}'\right\vert$ is surjective provided $\theta$ is surjective and each $A_{i}$-$A_{j}$-bimodule $M_{ij}$ is unital.  
\end{lem}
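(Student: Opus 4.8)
The plan is to mirror the proof of \Cref{lem-column-row-ligation-gives-bimodule-homomorphism-with-surjectivity-criteria} dually, exploiting the symmetry between the pair $(N_{\fk{m}},L_{\fk{m}})$ and the pair $(L_{\fk{m}},N_{\fk{m}})$ together with the interchange of the roles played by $\zeta$ and $\theta$. The map $\left\vert\underline{\alpha}'_{ikj}\right\vert$ is well defined on the tensor product by \Cref{lem-second-bilinear-forms-between-excised-balanced}, so it remains to prove it is a bimodule homomorphism and to establish the surjectivity criterion. First I would verify the bimodule homomorphism property by checking separately that $\left\vert\underline{\alpha}'_{ikj}\right\vert$ is a left and a right $[A_{i}';\,M_{ij}']$-module homomorphism. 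Since $L_{\fk{m}}$ carries its left $[A_{i}';\,M_{ij}']$-structure via the maps $\gamma'_{ikj}$ of \Cref{lem-row-excision-bimodule-structure}(i), and $N_{\fk{m}}$ carries its right $[A_{i}';\,M_{ij}']$-structure via the maps $\beta'_{ikj}$ of \Cref{lem-column-excision-bimodule-structure}(ii), as in \S\ref{subsec-morita-context-homomorphisms-of-modules} it suffices to check the identities
\[
\alpha'_{ikj}(\gamma'_{ihk}\otimes \myid_{kj}^{N})=\varphi'_{ihj}(\myid_{ih}^{M\prime}\otimes \alpha'_{hkj}),
\qquad
\alpha'_{ihj}(\myid_{ih}^{L}\otimes \beta'_{hkj})=\varphi'_{ikj}(\alpha'_{ihk}\otimes \myid_{kj}^{M\prime}),
\]
for all $h,i,j,k=1,\dots,n$.

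For the verification itself I would break into cases according to which of $h,i,j,k$ coincide with $t$, exactly as in the cited proofs. When none of the relevant indices equals $t$, each identity is one of the mixed associativity conditions held by $(A_{i};\,M_{ij};\,\varphi_{ikj})$. The cases in which some index equals $t$ reduce, after unwinding the definitions of $\alpha'$, $\gamma'$ and $\beta'$, either to the mixed associativity conditions for the composed context $(A_{i}';\,M_{ij}';\,\varphi_{ikj}')$---available by \Cref{lem-extension-is-a-Morita-context-associativity}---or to a single associativity condition $\varphi_{itj}(\myid\otimes\varphi_{tkj})=\varphi_{ikj}(\varphi_{itk}\otimes\myid)$ for the original context. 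The only genuinely new feature relative to those lemmas is the appearance of $\theta$ in the entry $M_{tt}'=S$; I would check that the resulting identities follow from the linearity of $\theta$ together with the mixed associativity conditions of $(R,S;\,N,L;\,\zeta,\theta)$, exactly as the analogous $\zeta$- and $\theta$-identities were treated in the proofs of \Cref{lem-extension-is-a-Morita-context-associativity} and \Cref{lem-first-bilinear-forms-between-excised-balanced}.

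For surjectivity I would fix $i$ and $j$ and exhibit a single $k$ for which $\alpha'_{ikj}\colon L_{ik}\otimes_{A_{k}}N_{kj}\to M_{ij}'$ is already surjective; surjectivity of $\left\vert\underline{\alpha}'_{ikj}\right\vert$ onto the $(i,j)$-entry then follows. Unitality of each $M_{ij}$ makes $\varphi_{iij}=\lambda^{A_{i}}_{M_{ij}}$ and $\varphi_{ijj}=\rho^{A_{j}}_{M_{ij}}$ surjective. Thus when $i\neq t$ I take $k=i$ and when $j\neq t$ I take $k=j$; in the cases $i\neq t\neq j$, $i\neq t=j$ and $i=t\neq j$ the map $\alpha'_{ikj}$ is then $\varphi_{iij}$, $\varphi_{iit}\otimes\myid_{N}$ or $\myid_{L}\otimes\varphi_{tjj}$ respectively, each surjective since tensoring preserves surjectivity. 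This leaves the entry $i=t=j$, where $M_{tt}'=S$; taking $k=t$ makes $\alpha'_{ttt}$ the composite $\theta\circ(\rho_{L}^{R}\otimes\myid_{N})\circ(\myid_{L}\otimes\mu_{R}\otimes\myid_{N})$, which is surjective because $\mu_{R}$ and $\rho_{L}^{R}$ are surjective and $\theta$ is surjective by hypothesis.

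The step I expect to be the main obstacle is the case bookkeeping in the homomorphism verification: keeping track of which associativity condition---that of the original context, of the composed context, or of the underlying $(R,S;\,N,L;\,\zeta,\theta)$---applies at each position where an index equals $t$, and matching it correctly against the piecewise definitions of $\alpha'$, $\gamma'$ and $\beta'$. By contrast, once the correct $k$ is selected in each case the surjectivity argument is essentially immediate.
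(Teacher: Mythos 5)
Your proposal is correct and follows essentially the same route as the paper's proof, which states the same two identities for the left and right module-homomorphism checks, runs the same case analysis on which indices equal $t$, and for surjectivity likewise reduces to choosing a suitable $k$ for each pair $(i,j)$ (the paper in fact omits most of these details, deferring to the dual \Cref{lem-column-row-ligation-gives-bimodule-homomorphism-with-surjectivity-criteria}). The only point worth flagging is that in the case $i=t=j$ your appeal to the surjectivity of $\rho_{L}^{R}$ tacitly uses that $L$ is a unital right $R$-module, which is not among the stated hypotheses of the lemma --- though it holds in every application the paper makes (e.g.\ \Cref{cor-main-theorem-about-contexts-restricts-to-equivalences}), and the paper's own omitted surjectivity argument would need the same input.
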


\begin{proof}

Recall the  left and right structure maps $\gamma_{ijl}'\colon M_{ij}'\otimes_{A_{j}'}L_{jl}\to L_{il}$ and   $\beta_{ihk}'\colon N_{ih}\otimes_{A_{h}'} M_{hk}'\to N_{ik}$ from Lemmas \ref{lem-row-excision-bimodule-structure}(i) and \ref{lem-column-excision-bimodule-structure}(ii).  
As in the proof of \Cref{lem-column-row-ligation-gives-bimodule-homomorphism-with-surjectivity-criteria}, to show $\left\vert\underline{\alpha}_{ikj}'\right\vert$ is a left $[A_{i}';\,M_{ij}']$-module homomorphism it suffices to show $\alpha_{ikj}'(\gamma_{ihk}'\otimes \myid_{kj}^{N})=\varphi_{ihj}'(\myid_{ih}^{M\prime}\otimes \alpha_{hkj}')$. 

The proof that this equation holds is similar to the arguments seen so far in the article. Namely, consider cases, and  when appropriate, apply the mixed associativity conditions for $(A_{i};\,M_{ij};\,\varphi_{ikj})$ or apply \Cref{lem-extension-is-a-Morita-context-associativity}.  
The proof that $\left\vert\underline{\alpha}_{ikj}'\right\vert$ is a right $[A_{i}';\,M_{ij}']$-module homomorphism follows a similar procedure. 

The proof that $\left\vert\underline{\alpha}_{ikj}'\right\vert$ is surjective given the stated provisions hold follows similar arguments to those in the proof of the second statement in \Cref{lem-column-row-ligation-gives-bimodule-homomorphism-with-surjectivity-criteria}, and omitted. 
\end{proof}

% The use of the word \emph{excision} in the title of \S\ref{sec-excision-and-ligation} refers to the idea of cutting $R$ out from $[A_{i};\,M_{ij}]$. 
% Likewise the use of the word \emph{ligation} refers to replacing it with $S$ to give an adjusted generalised matrix ring $[A_{i}';\,M_{ij}']$. 

\subsection{Corner replacement}
\label{subsec-corner-replace}

We conclude \S\ref{sec-excision-and-ligation} by showing the canonical additive maps considered in Lemmas \ref{lem-column-row-ligation-gives-bimodule-homomorphism-with-surjectivity-criteria} and \ref{lem-row-column-ligation-gives-bimodule-homomorphism-with-surjectivity-criteria} satisfy mixed associativity conditions, and hence give rise to a classical Morita context of matrix rings and matrix bimodules;  see \Cref{thm-induced-contexts-for-generalised-matrix-rings}.

The title of \S\ref{subsec-corner-replace} refers to the following `surgery' to be performed on $(A_{i};\,M_{ij};\,\varphi_{ikj})$. 
\begin{itemize}
    \item `Cut out' the ring $R$ and the bimodules $M_{ij}$ ($i=t\neq j$ or $j=t\neq i$)   using the column-excision and row-excision matrix bimodules from \S\ref{sec-excision}. 
    \item `Graft back' a new context from the column-row and row-column ligations in \S\ref{sec-ligation}.
    \item Using the items above we `corner replace' $R$ in $[A_{i};\,M_{ij}]$ with $S$ to obtain $[A_{i}';\,M_{ij}']$.
\end{itemize}  

\begin{lem}
\label{lem-mixed-associativity-conditions-between-excised-matrix-bimodules}
The maps $\left\vert\underline{\alpha}_{ikj}\right\vert$ and $\left\vert\underline{\alpha}_{ikj}'\right\vert$ satisfy the mixed associativity conditions.
\end{lem}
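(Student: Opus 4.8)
The plan is to verify the two mixed associativity conditions listed in the final two items of \S\ref{subsec-morita-context-associativty-of-balanced-maps}, namely
\[
\beta_{ikj}(\alpha_{ihk}\otimes \myid)=\beta_{ihj}'(\myid \otimes \alpha_{hkj}')
\quad\text{and}\quad
\gamma_{ihj}(\myid\otimes \alpha_{hkj})=\gamma_{ikj}'(\alpha_{ihk}'\otimes \myid),
\]
where the first is an equation of maps $N_{ih}\otimes L_{hk}\otimes N_{kj}\to N_{ij}$ and the second of maps $L_{ih}\otimes N_{hk}\otimes L_{kj}\to L_{ij}$. Since $\left\vert\underline{\alpha}_{ikj}\right\vert$ and $\left\vert\underline{\alpha}_{ikj}'\right\vert$ are the factorisations through the relevant tensor products of the ligation maps $\left\vert\alpha_{ikj}\right\vert$ and $\left\vert\alpha_{ikj}'\right\vert$ (Lemmas \ref{lem-column-row-ligation-gives-bimodule-homomorphism-with-surjectivity-criteria} and \ref{lem-row-column-ligation-gives-bimodule-homomorphism-with-surjectivity-criteria}), and since the structure maps $\beta,\gamma,\beta',\gamma'$ are likewise built componentwise, it suffices to check these identities at the level of the component maps $\alpha_{ihk},\alpha_{hkj}'$ and the structure maps recalled in Definitions \ref{defn-column-excision-row-excision-additive-groups} and \ref{defn-row-excision-row-excision-additive-groups}. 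By symmetry between the two ligations it is enough to treat the first condition carefully and indicate that the second is dual.

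First I would reduce to cases according to which of the indices $h,i,j,k$ equal the distinguished index $t$, exactly as in the proofs of \Cref{lem-column-excision-bimodule-structure} and \Cref{lem-first-bilinear-forms-between-excised-balanced}. When none of $h,k$ equals $t$, the component maps $\alpha_{ihk},\alpha_{hkj}'$ reduce to the original $\varphi$-maps and the structure maps to $\varphi$- or $\varphi'$-maps, so the identity follows directly from the mixed associativity conditions for $(A_{i};\,M_{ij};\,\varphi_{ikj})$, and from \Cref{lem-extension-is-a-Morita-context-associativity} when indices force $\varphi'$-maps to appear. The genuinely interesting cases are those where $h=t$ or $k=t$, since then the maps $\zeta$ and $\theta$ of the classical Morita context $(R,S;\,N,L;\,\zeta,\theta)$ enter, together with the tensor factors $N$ and $L$. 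In each such case I would trace a general pure tensor through both composites and check they agree, the key inputs being the associativity conditions $\theta(\myid\otimes\lambda)=\theta(\rho\otimes\myid)$-type relations and $\zeta$-relations of the classical context, together with the bimodule linearity of $\zeta,\theta$ and the $\varphi_{tkt}$-maps.

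The hardest case is expected to be $h=t=k$ (or its counterpart in the dual identity), where both $\zeta$ and $\theta$ appear simultaneously: here one side of the equation contracts an $N\otimes L$ pair via $\zeta$ landing in $R$ and acts on an $M_{tk}$-factor, while the other side contracts an $L\otimes N$ pair via $\theta$ landing in $S$ and then acts through $\rho_N^S$ or $\lambda_L^S$. Reconciling the two requires precisely the compatibility between $\zeta$ and $\theta$ encoded in the associativity of the classical Morita context, i.e.\ the identities $\zeta(n\otimes l)n'=n\,\theta(l\otimes n')$ and $l\,\zeta(n\otimes l')=\theta(l\otimes n)l'$; this is the same mechanism that drove the computation displayed in the proof of \Cref{lem-extension-is-a-Morita-context-associativity} for the case $h=j=t\neq i,k$. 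I would carry out this one case in full detail, verifying that both composites send a pure tensor to the same element of $M_{ij}$ (resp.\ $M_{ij}'$), and then remark that the remaining cases follow by the same pattern of substitutions, being either straightforward or directly analogous to computations already performed in Lemmas \ref{lem-extension-is-a-Morita-context-associativity}, \ref{lem-column-excision-bimodule-structure} and \ref{lem-first-bilinear-forms-between-excised-balanced}. Once both mixed associativity conditions are established, the conclusion of \S\ref{subsec-morita-context-associativty-of-balanced-maps} applies and the desired classical Morita context of matrix rings is obtained.
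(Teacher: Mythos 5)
Your proposal follows essentially the same route as the paper: write out the two mixed associativity conditions, split into cases according to which indices equal $t$, dispose of the cases not involving $\zeta$ or $\theta$ via the mixed associativity of $(A_{i};\,M_{ij};\,\varphi_{ikj})$ (possibly tensored with identities), and verify a representative hard case by chasing a pure tensor using the classical context's compatibility identities $\zeta(n\otimes l)n'=n\,\theta(l\otimes n')$ and $l\,\zeta(n\otimes l')=\theta(l\otimes n)l'$ — exactly the paper's strategy, which carries out $h=i=k=t\neq j$ in full for both equations. The only quibble is bookkeeping: for the first condition the indices governing whether $\zeta,\theta$ appear are $h$ and $j$ (not $h$ and $k$), so your "interesting" case list is slightly misaligned, but every case is still covered and none of your reductions fails.
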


\begin{proof}
Considering the conditions from \S\ref{subsec-morita-context-associativty-of-balanced-maps}, we need to check the equations in the final two items hold. 
That is, we need to check that the equations 
\[
\begin{array}{cc}
(*) & \beta_{ikj}(\alpha_{ihk}\otimes \myid_{kj}^{N})=\beta_{ihj}'(\myid_{ih}^{N} \otimes \alpha_{hkj}')
,
\\
(**) &
\gamma_{ihj}(\myid_{ih}^{L}\otimes \alpha_{hkj})=\gamma_{ikj}'(\alpha_{ihk}'\otimes \myid_{kj}^{L}),
\end{array}
\]
hold for all $h,i,j,k=1,\dots,n$. 
As usual we break the proof into cases. 

When $h\neq t\neq j$ the required equality $(*)$ follows directly from the mixed associativity conditions for $(A_{i};\,M_{ij};\,\varphi_{ikj})$. Similarly when $i\neq t\neq k$ the equality $(**)$ follows directly. 

When $h\neq t=j$ we can rewrite $(*)$ as  $\sigma\otimes \myid_{N}=\tau\otimes \myid_{N}$ for $A_{i}$-$R$-bimodule homomorphisms  $\sigma$ and $\tau$, and  the aforementioned associativity conditions  give $\sigma=\tau$. 
Likewise when $i=t\neq k$ there exist $R$-$A_{j}$ bimodule homomorphisms $\sigma'$ and $\tau'$ where $(**)$ has the form $\myid_{L}\otimes\sigma'=\myid_{L}\otimes\tau'$ and where  $\sigma'=\tau'$ can be seen following a similar argument. 

We complete the proof by showing $(*)$ and $(**)$ hold in the case where $h=i=k=t\neq j$. 
All the cases that remain for checking $(*)$ holds, and all those that remain for $(**)$, can be seen using similar arguments: and hence we omit such details. 

So, suppose $h,i,k=t\neq j$.  
Here we have  
\[
\begin{array}{ccc}
 \beta_{ikj}=\varphi_{ttj}, & 
 \alpha_{ihk}=\varphi_{ttt}(\rho_{M_{tt}}^{R}\otimes \myid_{tt}^{M})(\myid_{tt}^{M} \otimes\zeta \otimes\myid_{tt}^{M}),\\
 \myid_{kj}^{N}=\myid_{tj}^{M},
 &
 \myid_{ih}^{N}=\myid_{tt}^{M}\otimes \myid_{N},\\
 \beta_{ihj}'=\lambda_{M_{tj}}^{R}(\mu_{R}\otimes \myid^{M}_{tj})(\myid_{R}\otimes \zeta\otimes \myid^{M}_{tj}), &
 \alpha_{hkj}'=\myid_{L}\otimes\varphi_{ttj}.
\end{array}
\]
Consequently, substituting these values, the equation $(*)$ becomes
\[
\varphi_{ttj}
(
(\varphi_{ttt}(\rho_{M_{tt}}^{R}\otimes \myid_{tt}^{M})(\myid_{tt}^{M} \otimes\zeta \otimes\myid_{tt}^{M} ))
\otimes 
\myid_{tj}^{M}
)
=
\lambda_{M_{tj}}^{R}(\mu_{R}\otimes \myid^{M}_{tj})(\myid_{R}\otimes \zeta\otimes \myid^{M}_{tj})
(
\myid_{tt}^{M}\otimes \myid_{N}
\otimes 
\myid_{L}\otimes\varphi_{ttj}
).
\]
That this equality holds is straightforward, since for any $r,r'\in R$, $n\in N$, $l\in L$ and $m\in M_{tj}$ both sides send the pure tensor $r\otimes n\otimes l\otimes r'\otimes m$ to $r\zeta(n\otimes l)r'm$. 

We now check $(**)$ holds, which is more involved. For the case we are in we also have 
\[
\begin{array}{ccc}
\gamma_{ihj}=\myid_{L}\otimes \varphi_{ttj}, & 
\alpha_{hkj}=\varphi_{ttj}(\rho_{M_{tt}}^{R}\otimes \myid_{tj}^{M})(\myid_{tt}^{M} \otimes\zeta \otimes\myid_{tj}^{M} ), \\
\myid_{ih}^{L}=\myid_{L}\otimes \myid_{tt}^{M},
 &
 \gamma_{ikj}'=\varphi_{ttj}'=\lambda_{L}^{S}\otimes \myid^{M}_{tj}, \\
 \alpha_{ihk}'=\theta(\rho_{L}^{R}\otimes \myid_{N})(\myid_{L}\otimes \varphi_{ttt}\otimes \myid_{N}), &
 \myid_{kj}^{L}=\myid_{L}\otimes \myid_{tj}^{M}.
\end{array}
\]
As was done above by substituting these values into $(**)$ we require the equation
\[
\myid_{L}\otimes (\varphi_{ttj}
(\myid_{tt}^{M}
\otimes 
(\varphi_{ttj}(\rho_{M_{tt}}^{R}\otimes \myid_{tj}^{M})(\myid_{tt}^{M} \otimes\zeta \otimes\myid_{tj}^{M} ))
))
=
(\lambda_{L}^{S}
\theta(\rho_{L}^{R}\otimes \myid_{N})(\myid_{L}\otimes \varphi_{ttt}\otimes \myid_{N})
\otimes 
\myid_{L}
)
\otimes \myid^{M}_{tj}.
\]
Fix  $l,l'\in L$, $r,r'\in R$, $n\in N$ and $m\in M_{tj}$. 
By using the mixed associativity conditions for $(R,S;\,N,L;\,\zeta,\theta)$, the map on the right hand side sends $l\otimes r\otimes r'\otimes n\otimes l'\otimes m$ to
\[
\begin{split}
\lambda_{L}^{S}
\left(
\theta(\rho_{L}^{R}\otimes \myid_{N})(\myid_{L}\otimes \varphi_{ttt}\otimes \myid_{N})
(l\otimes  r\otimes r'\otimes n)
\otimes l'
\right)\otimes m
\\
=
\lambda_{L}^{S}
\left(
\theta(\rho_{L}^{R}\otimes \myid_{N})
(l\otimes  r r'\otimes n)
\otimes l'
\right)\otimes m
=
\lambda_{L}^{S}
\left(
\theta
(l r r'\otimes n)
\otimes l'
\right)\otimes m
\\
=
\theta
(l r r'\otimes n)
l'
\otimes m
=
lr r'
\zeta
(n\otimes l')
\otimes m
=l
\otimes r r'
\zeta
(n\otimes l')m
\end{split}
\]
which is the image of $l\otimes r\otimes r'\otimes n\otimes l'\otimes m$ under the map on the left hand side. 
\end{proof}

We are now ready to state and prove our main result. 

\begin{thm}
\label{thm-induced-contexts-for-generalised-matrix-rings}
Let $(A_{i}';\,M_{ij}';\,\varphi_{ikj}')$ be the composition of a generalised Morita context $(A_{i};\,M_{ij};\,\varphi_{ikj})$ with a Morita context $(R,S;\,N,L;\,\theta,\zeta)$ where $R=A_{t}$ for some $t$.

In the notation so far, consider: the generalised matrix rings $[A_{i};\,M_{ij}]$ and $[A_{i}';\,M_{ij}']$; the matrix bimodules $N_{\fk{m}}$ and $L_{\fk{m}}$ given by row and column excision at $t$; and the induced maps $\left\vert\underline{\alpha}_{ikj}\right\vert\colon N_{\fk{m}}\otimes L_{\fk{m}}\to [A_{i};\,M_{ij}]$ $\left\vert\underline{\alpha}_{ikj}'\right\vert\colon L_{\fk{m}}\otimes N_{\fk{m}}\to [B_{i};\,P_{ij}]$ from the balanced maps of row and column ligation at $t$. 
Then the tuple 
\[
([A_{i};\,M_{ij}],[A_{i}';\,M_{ij}'];\,N_{\fk{m}}, L_{\fk{m}};\,\left\vert\underline{\alpha}_{ikj}\right\vert,\left\vert\underline{\alpha}_{ikj}\right\vert')
\]
is a Morita context. 
\end{thm}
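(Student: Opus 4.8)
The plan is to verify, one by one, the four conditions laid out in \S\ref{subsec-morita-context-associativty-of-balanced-maps} that characterise when a tuple of the displayed shape constitutes a classical Morita context; in fact essentially all of the substantive content has already been distributed across the preceding lemmas, so the proof reduces to assembling them. First I would record that, by \Cref{lem-extension-is-a-Morita-context-associativity}, the composition $(A_{i}';\,M_{ij}';\,\varphi_{ikj}')$ is genuinely a generalised Morita context, so both $[A_{i};\,M_{ij}]$ and $[A_{i}';\,M_{ij}']$ are bona fide generalised matrix rings. Then I would invoke \Cref{lem-column-excision-bimodule-structure} and \Cref{lem-row-excision-bimodule-structure} to note that $N_{\fk{m}}$ and $L_{\fk{m}}$ are, respectively, an $[A_{i};\,M_{ij}]$-$[A_{i}';\,M_{ij}']$- and an $[A_{i}';\,M_{ij}']$-$[A_{i};\,M_{ij}]$-matrix-bimodule, so that the two tensor products appearing in the statement are well-defined and the domains of $\left\vert\underline{\alpha}_{ikj}\right\vert$ and $\left\vert\underline{\alpha}_{ikj}'\right\vert$ make sense.

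Next I would dispatch the first two conditions, namely that $\left\vert\underline{\alpha}_{ikj}\right\vert$ and $\left\vert\underline{\alpha}_{ikj}'\right\vert$ are balanced bimodule homomorphisms valued in $[A_{i};\,M_{ij}]$ and $[A_{i}';\,M_{ij}']$ respectively. For the first, balancedness over $[A_{i}';\,M_{ij}']$ is precisely \Cref{lem-first-bilinear-forms-between-excised-balanced}, which guarantees that the column-row ligation factors through $N_{\fk{m}}\otimes_{[A_{i}';\,M_{ij}']}L_{\fk{m}}$, while the fact that the factored map is an $[A_{i};\,M_{ij}]$-$[A_{i};\,M_{ij}]$-bimodule homomorphism is the first assertion of \Cref{lem-column-row-ligation-gives-bimodule-homomorphism-with-surjectivity-criteria}. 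Symmetrically, the second condition is supplied by \Cref{lem-second-bilinear-forms-between-excised-balanced} together with \Cref{lem-row-column-ligation-gives-bimodule-homomorphism-with-surjectivity-criteria}.

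Finally, the two remaining conditions---the mixed associativity identities relating $\left\vert\underline{\alpha}_{ikj}\right\vert$ and $\left\vert\underline{\alpha}_{ikj}'\right\vert$---are exactly the content of \Cref{lem-mixed-associativity-conditions-between-excised-matrix-bimodules}. With all four conditions verified, the closing observation of \S\ref{subsec-morita-context-associativty-of-balanced-maps} applies verbatim and delivers the asserted Morita context. I expect the genuine difficulty to lie not in this assembly but inside the lemmas it cites: the most delicate ingredient is the associativity identity $(**)$ of \Cref{lem-mixed-associativity-conditions-between-excised-matrix-bimodules} in the corner case $h=i=k=t\neq j$, where the structure maps of $L_{\fk{m}}$ interact with the ligation data through $\theta$ and one must interlock $\zeta$ and $\theta$ by means of the associativity of the input context $(R,S;\,N,L;\,\zeta,\theta)$. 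I would therefore ensure that the case analysis in that lemma---indexed by which of $h,i,j,k$ coincide with $t$---is exhaustive, since the completeness of the present argument is inherited wholesale from the completeness of those checks.
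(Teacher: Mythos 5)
Your proposal is correct and matches the paper's own proof, which likewise just combines Lemmas \ref{lem-column-excision-bimodule-structure}, \ref{lem-row-excision-bimodule-structure}, \ref{lem-column-row-ligation-gives-bimodule-homomorphism-with-surjectivity-criteria}, \ref{lem-row-column-ligation-gives-bimodule-homomorphism-with-surjectivity-criteria} and \ref{lem-mixed-associativity-conditions-between-excised-matrix-bimodules}; your version merely spells out which condition of \S\ref{subsec-morita-context-associativty-of-balanced-maps} each lemma supplies and also explicitly cites the balancedness lemmas, which the paper leaves implicit.
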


\begin{proof}
Combine Lemmas \ref{lem-column-excision-bimodule-structure}, 
\ref{lem-row-excision-bimodule-structure}, 
\ref{lem-column-row-ligation-gives-bimodule-homomorphism-with-surjectivity-criteria}, 
\ref{lem-row-column-ligation-gives-bimodule-homomorphism-with-surjectivity-criteria} and \ref{lem-mixed-associativity-conditions-between-excised-matrix-bimodules}. 
\end{proof}

% \Cref{cor-main-theorem-about-contexts-restricts-to-equivalences} is perhaps more familiar

\begin{cor}
\label{cor-main-theorem-about-contexts-restricts-to-equivalences}
The context given by \Cref{thm-induced-contexts-for-generalised-matrix-rings} defines a Morita equivalence $[A_{i};\,M_{ij}]\sim [A_{i}';\,M_{ij}']$ assuming additionally that: the rings $A(i),R,S$ all have local units; that the bimodules $M_{ij},N,L$ are all unital; and that the maps  $\theta,\zeta$ are surjective. 
% Then the classical Morita context from  \Cref{thm-induced-contexts-for-generalised-matrix-rings} defines a Morita equivalence 
% \[
% \begin{array}{cc}
% \begin{bmatrix}
% A_{1} & \cdots & M_{1n} \\
% \vdots & \ddots & \vdots \\
% M_{n1} & \cdots & A_{n}
% \end{bmatrix}
% \sim
% \begin{bmatrix}
% B_{1} & \cdots & P_{1n} \\
% \vdots & \ddots & \vdots \\
% P_{n1} & \cdots & B_{n}
% \end{bmatrix}, 
% &
% (P_{ij}=L_{i}\otimes_{A_{i}}M_{ij}\otimes_{A_{j}}N_{j}).
% \end{array}
% \]
\end{cor}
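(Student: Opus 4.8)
The plan is to apply the \'{A}nh--M\'{a}rki criterion \Cref{thm-recalled-from-Anh-Marki} to the Morita context
\[
([A_{i};\,M_{ij}],[A_{i}';\,M_{ij}'];\,N_{\fk{m}},L_{\fk{m}};\,\left\vert\underline{\alpha}_{ikj}\right\vert,\left\vert\underline{\alpha}_{ikj}'\right\vert)
\]
furnished by \Cref{thm-induced-contexts-for-generalised-matrix-rings}. Since that theorem already establishes the context itself, it remains only to verify the three standing hypotheses of \Cref{thm-recalled-from-Anh-Marki} under the additional assumptions of the corollary, namely: (a) both generalised matrix rings $[A_{i};\,M_{ij}]$ and $[A_{i}';\,M_{ij}']$ have local units; (b) the matrix bimodules $N_{\fk{m}}$ and $L_{\fk{m}}$ are unital; and (c) the induced bimodule homomorphisms $\left\vert\underline{\alpha}_{ikj}\right\vert$ and $\left\vert\underline{\alpha}_{ikj}'\right\vert$ are surjective. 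Once (a)--(c) hold, \Cref{thm-recalled-from-Anh-Marki} gives that $N_{\fk{m}}\otimes_{[A_{i}';\,M_{ij}']}-$ and $L_{\fk{m}}\otimes_{[A_{i};\,M_{ij}]}-$ are mutually quasi-inverse equivalences, which is exactly the asserted Morita equivalence.

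Conditions (a) and (c) follow by assembling results already in hand. For (a), since each $A_{i}$ has local units and each $M_{ij}$ is unital, the remark in \S\ref{subsec-morita-context-ring} shows $[A_{i};\,M_{ij}]$ has local units; for $[A_{i}';\,M_{ij}']$ one notes that the rings $A_{i}'$ all have local units (as $A_{t}'=S$ has local units and $A_{i}'=A_{i}$ for $i\neq t$) and that the bimodules $M_{ij}'$ are unital by the second assertion of \Cref{lem-extension-is-a-Morita-context-associativity}, so the same remark applies. For (c), the surjectivity of $\left\vert\underline{\alpha}_{ikj}\right\vert$ is precisely the conclusion of \Cref{lem-column-row-ligation-gives-bimodule-homomorphism-with-surjectivity-criteria} under the hypotheses that $\zeta$ is surjective and each $M_{ij}$ is unital, while the surjectivity of $\left\vert\underline{\alpha}_{ikj}'\right\vert$ is that of \Cref{lem-row-column-ligation-gives-bimodule-homomorphism-with-surjectivity-criteria} using that $\theta$ is surjective.

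The substantive point is (b), the unitality of the matrix bimodules, which is not recorded in the earlier lemmas and so must be checked directly. I would argue it entrywise using the diagonal structure maps. Consider $N_{\fk{m}}=(N_{ij})$ as a left $[A_{i};\,M_{ij}]$-module: for a column index $j\neq t$ the entry $N_{ij}=M_{ij}$ is acted on via $\beta_{iij}=\varphi_{iij}=\lambda^{A_{i}}_{M_{ij}}$, which is surjective since $M_{ij}$ is unital; for the column $t$ the entry $N_{it}=M_{it}\otimes_{R}N$ is acted on via $\beta_{iit}=\lambda^{A_{i}}_{M_{it}}\otimes\myid_{N}$, which is surjective because $\lambda^{A_{i}}_{M_{it}}$ is surjective and tensoring is right exact. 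Collecting these contributions shows the left action map on $N_{\fk{m}}$ is surjective, so $N_{\fk{m}}$ is unital as a left $[A_{i};\,M_{ij}]$-module; the right $[A_{i}';\,M_{ij}']$-action is handled symmetrically, checking surjectivity of each diagonal map $\rho^{A_{j}'}_{N_{ij}}$ using that $N$ and the $M_{ij}'$ are unital together with right-exactness, and $L_{\fk{m}}$ is treated dually.

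I expect this unitality bookkeeping to be the main (though routine) obstacle, since it requires re-examining each family of structure maps from \Cref{lem-column-excision-bimodule-structure} and \Cref{lem-row-excision-bimodule-structure} and invoking right-exactness of the relevant tensor products; everything else reduces to citing the lemmas assembled above. With (a)--(c) in place, the corollary follows from \Cref{thm-recalled-from-Anh-Marki}.
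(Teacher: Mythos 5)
Your proposal is correct and follows essentially the same route as the paper: the paper's proof simply cites Lemmas \ref{lem-column-row-ligation-gives-bimodule-homomorphism-with-surjectivity-criteria} and \ref{lem-row-column-ligation-gives-bimodule-homomorphism-with-surjectivity-criteria} together with Theorems \ref{thm-recalled-from-Anh-Marki} and \ref{thm-induced-contexts-for-generalised-matrix-rings}. Your additional entrywise verification of local units and unitality of $N_{\fk{m}}$ and $L_{\fk{m}}$ is sound bookkeeping that the paper leaves implicit.
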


\begin{proof}
Apply Lemmas  \ref{lem-column-row-ligation-gives-bimodule-homomorphism-with-surjectivity-criteria} and 
\ref{lem-row-column-ligation-gives-bimodule-homomorphism-with-surjectivity-criteria} and Theorems \ref{thm-recalled-from-Anh-Marki} and \ref{thm-induced-contexts-for-generalised-matrix-rings}. 
\end{proof}

\begin{proof}[Proof of Proposition \ref{prop-induced-contexts-for-calssical-matrix-rings}]
Specify \Cref{cor-main-theorem-about-contexts-restricts-to-equivalences} to $t=n=2$, $(1-e)R(1-e)=A_{1}=B_{1}$, $A_{2}=eRe$ and $B_{2}=S$.
\end{proof}

We end \S\ref{sec-excision-and-ligation} with some remarks. 
Firstly, the proof of our main result can probably be found using an inductive argument, however our direct approach is still rather elementary (and perhaps illustrative). 
Secondly, our results likely hold even after extending the class of rings we are considering, rings with local units, to the class of all idempotent rings. 
Note that Morita theory for idempotent rings was developed by Garc\'{\i}a and Sim\'{o}n \cite{Garcia-Simon-Morita-equivalence-idempotent-rings}.

\section{Examples and applications}
\label{sec-examples}

The author believes the following examples present interesting applications for the Morita theory of corner replacement. 

\subsection{Triangular matrix rings} Let $R$, $S$, and $T$ be unital rings and let $M$ be an $R$-$T$-bimodule. 
Hence one can consider the Morita context $(R,T; M, 0; 0, 0)$. 
Suppose that the rings $R$ and $S$ are Morita equivalent, and hence there is a Morita context $(R,S; N,L;\zeta,\theta)$ where $\zeta$ and $\theta$ are surjective bimodule homomorphisms. 
Composing $(R,S; N,L;\zeta,\theta)$ and $(R,T; M, 0; 0, 0)$ defines another Morita context $(S,T; L\otimes_{R}M,0;0,0)$ by \Cref{lem-extension-is-a-Morita-context-associativity}. 

In fact, by \Cref{cor-main-theorem-about-contexts-restricts-to-equivalences} the generalised matrix rings 
\[
\begin{array}{cc}
\begin{bmatrix}
R & M \\
0 & T
\end{bmatrix},
     &  
     \begin{bmatrix}
S & L\otimes M \\
0 & T
\end{bmatrix}
\end{array}
\]
given respectively by the contexts $(R,T; M, 0; 0, 0)$ and $(S,T; L\otimes_{R}M,0;0,0)$ are Morita equivalent, and this is precisely a result of Chen and Zheng \cite[Corollary 4.7]{ChenZheng-comma} concerning Morita equivalent upper-triangular matrix rings.  
As an example, Morita equivalent finite-dimensional algebras have Morita equivalent trivial extensions; see \cite[Corollary 4.8]{ChenZheng-comma}. 
Such Morita equivalences were used in work on the symmetry of the finitistic dimension; see for example work of Krause \cite[Lemma 5]{Krause-symmetr-finitistic}.

\subsection{Tensor rings of finite prospecies}
\label{subsec-Presenting-prospecies-by-quivers-and-relations}
We follow work of K\"{u}lshammer. 
Let $n\geq 1$ be an integer and let $K$ be an algebraically closed field. 
Consider a \emph{finite} \emph{pro}-\emph{species} over $K$. 
So, for each $i=1,\dots, n$ we let $A_{i}$ be a finite-dimensional $K$-algebra, and for each $i,j$ let 
$N_{ij}$ be an $A_{j}$-$A_{i}$-bimodule that is finitely-generated and projective both as a left $A_{j}$-module and also as a right $A_{i}$-module. 
Now we let 
\[
\begin{array}{cc}
M_{ij}=\bigoplus_{\ell\geq 1}\bigoplus_{d(1),\dots,d(\ell)=1}^{n}N_{id(1)}\otimes_{A_{d(1)}}\dots \otimes_{A_{d(\ell)}}N_{d(\ell)j}, 
&
M_{ji}=0.
\end{array}
\]
Thus $(A_{i}; M_{ij};\varphi_{ikj})$  is a generalised Morita context where $\varphi_{ikj}\colon M_{ik}\otimes_{A_{k}} M_{kj}\to M_{ij}$ is the canonical inclusion in the  direct sum. 
Now suppose the finite pro-species fixed above is \emph{dualisable} (in the sense of \cite[Definition 4.4]{Kulshammer-pro-species}), meaning that $\Hom_{\rMod{A_{i}}}(M_{ij},A_{i})\cong \Hom_{\lMod{A_{j}}}(M_{ij},A_{j})$ as $A_{j}$-$A_{i}$-bimodules for each $i,j$. 
Here the generalised matrix ring $[A_{i};M_{ij}]$ is also a finite-dimensional algebra, and is hence Morita equivalent to an admissible quotient $KQ/J$ of a path algebra $KQ$ of a quiver $Q$ by Gabriel's theorem. 
We can compute $Q$ and $J$ by understanding each $A_{i}$ and each $N_{ij}$ as follows. 

Firstly, again by Gabriel's theorem, for each $i$ there is a quiver $Q_{i} $ and an admissible ideal $J_{i}$ of $KQ_{i}$ such that  $A_{i}$ is Morita equivalent to  $B_{i}=KQ_{i}/J_{i}$. 
Let us write $(A_{i},B_{i};N_{i},L_{i};\zeta_{i},\theta_{i})$ for the context defining this  Morita equivalence. 
Secondly, since they define a Morita equivalence, the bimodules $N_{i}$ and $L_{i}$ are both finitely generated and projective on either side;  see for example \cite{Bass-algebraic-K-theory}. 
Thirdly, as noted just before \cite[Definition 2.2]{Kulshammer-pro-species}, this means that for each $i,j=1,\dots,n$ we have that  $L_{i}\otimes _{A_{i}} M_{ij}\otimes_{A_{j}}N_{j}$ is also finitely generated and projective both as a left $B_{i}$-module and a right $B_{j}$-module. 

Hence the generalised Morita context 
$(A_{i}; M_{ij};\varphi_{ikj})$ can be composed with each of the Morita contexts $(A_{i},B_{i};N_{i},L_{i};\zeta_{i},\theta_{i})$ using row-column ligation and column-row ligation to form a new generalised Morita context $(B_{i}; M'_{ij};\varphi'_{ikj})$ that must necessarily define a pro-species. 
Furthermore by  \Cref{cor-main-theorem-about-contexts-restricts-to-equivalences} the generalised matrix rings $[A_{i};M_{ij}]$ and $[B_{i};M'_{ij}]$ are Morita equivalent. 
Finally, one obtains a quiver $Q$ and an admissible ideal $J$ of $KQ$ using the quivers $Q_{i}$ and the ideals $J_{i}$ by means of \cite[Proposition 7.1]{Kulshammer-pro-species}.

\subsection{Semilinear clannish algebras and orbifold triangulations}
\label{subsec-Semilinear-clannish-algebras-and-orbifold-triangulations}

% Let $K$ be a division ring. 

% If $\theta\in \Aut(K)$ then the \emph{twist} ${}_{\theta}V$ of  a left $K$-module $V$ (\emph{by $\theta$}) is the left $K$-module whose underlying  additive group is $V$, and where $\lambda.v=\theta(\lambda)v$ for all $\lambda\in K$ and $v\in V$. 
% In other words, the left $K$-action map $K\times {}_{\theta}V\to {}_{\theta}V$ is given by the composition of (the biadditive map $ K\times V\to K\times  V$, $(\lambda,v)\mapsto (\theta(\lambda),v)$) with the left $K$-action map $K\times V\to V$. 

% Let $Q$ be a quiver, possibly infinite. Let $\boldsymbol{\sigma}=(\sigma_{x}\mid x\in Q_{1})$ be a collection of automorphisms $\sigma_{x}\in \Aut(K)$, one for each $x\in Q_{1}$. The \emph{semilinear path algebra} $K_{\boldsymbol{\sigma}} Q$ is the $K$-ring generated by the trivial paths and arrows, subject to the following relations.
% \[
% \begin{array}{ccccc}
%   \sum_{v\in Q_0} e_{v} = 1, & e_{v}e_{v}=e_{v}, & e_{u} e_{v} = 0, &  (u,v\in Q_{0}\text{ with }u\neq v).\\[-1em]\\
%   e_{v} \lambda = \lambda e_{v}, & e_{h(x)} x = x = x e_{t(x)}, & x \lambda = \sigma_{x}(\lambda) x, &  (v\in Q_{0}, x\in Q_{1}, \lambda \in K).
% \end{array}
% \]
Let $F$ be a field containing primitive $4^{\text{th}}$ root of unity $\zeta\in F$. 
% For example $\mathbb{Q}_{p}$ where $p>0$ is prime with $4\mid p-1$. 
Let $E/F$ be a degree-$4$ cyclic Galois extension, say where $\mathrm{Gal}_{F}(E)=\{\myid,\rho,\rho^{2},\rho^{3}\}$. 
% For example adjoin $d^\text{th}$ roots of unity to  $\mathbb{Q}_{p}$ where $d=p^{4}-1$. 
Let $L=\{\ell\in E\mid \rho^{2}(\ell)=\ell\}=F(u)$, $u=v^{2}$ for $0\neq v\in E$ with $u^{2}=-1$,  $\rho(v)=\zeta v$. 
Fix $\theta\in \mathrm{Gal}(E/F)$  and write $L^{\theta}$ for the $L$-bimodule whose right action is twisted by $\theta$.
 
In work of Geuenich and Labardini-Fragoso \cite{Geuenich-labardini-fragoso-surfaces} these authors associate, to the \emph{self}-\emph{folded} triangle connecting orbifold points on the left, the $F$-species mnemotechnically described in the middle, which itself is associated to the weighted quiver of the right. 
\[
\begin{array}{ccc}
\begin{array}{c}
\includegraphics[scale=.13]{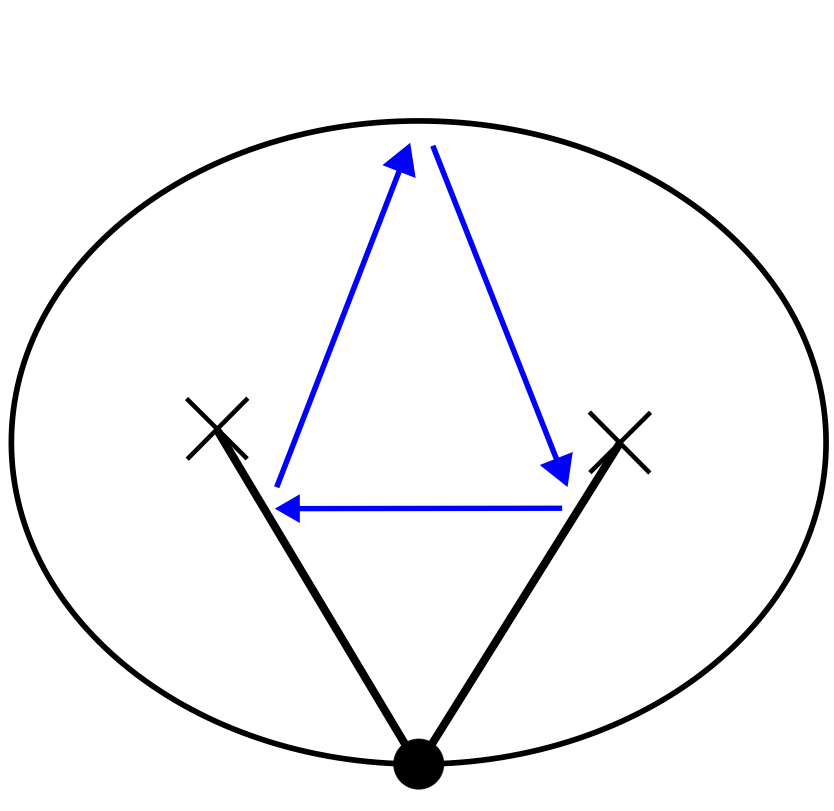}
\end{array}
       &
       \begin{array}{c}
\begin{tikzcd}
     & L  \arrow[dr, "F\otimes_{F} L"] & \\
 E \arrow[ur, "L^{\theta}\otimes_{L} E"] & & F \arrow[ll, "E\otimes_{F} F"]
\end{tikzcd}

\\
% \\
% \begin{array}{ccc}
% \frac{1}{2} (\beta\gamma + 
% \theta^{-\xi}(u^{-1})\beta\gamma u),
% &
% \gamma\alpha,
% &
% \alpha\beta.
\end{array}
&
\left( \begin{array}{cc}
\tiny{
\begin{tikzcd}[column sep=0.3cm]
& 1 \arrow[dr, "\gamma"] & &\\
2 \arrow[ur, "\alpha"] & & 3\arrow[ll, "\beta"] &
\end{tikzcd}
\begin{tikzcd}[column sep=0.05cm]
& 2=[L:F]  & \\
 4=[E:F]  & & 1=[F:F]
\end{tikzcd}}
\end{array}\right)
\end{array}
\]
Cyclic derivatives of the potential $\alpha\beta\gamma$ give the  relations $\frac{1}{2} (u\beta\gamma \pm \beta\gamma u)$, $\gamma\alpha$ and $\alpha\beta$ (the sign depends on $\theta$). 
The tensor ring $\Gamma$ of this species with potential has presentation
\[
\begin{bmatrix}
L & L^{\theta}\otimes_{L} E & 0 \\
E\otimes_{F}L/\langle w\rangle & E & E\otimes_{F} F \\
F\otimes_{F} L & 0 & F
\end{bmatrix}\sim 
\begin{bmatrix}
L & L^{\theta}\otimes_{L} \frac{L[x]}{\langle x^{2} - u\rangle} & 0 \\
\frac{L[x]}{\langle x^{2} - u\rangle} \otimes_{F}L/\langle w\rangle & \frac{L[x]}{\langle x^{2} - u\rangle} & \frac{L[x]}{\langle x^{2} - u\rangle}\otimes_{F} \frac{L[y;\tau]}{\langle y^{2}-1\rangle} \\
\frac{L[y;\tau]}{\langle y^{2}-1\rangle}\otimes_{F} L & 0 & \frac{L[y;\tau]}{\langle y^{2}-1\rangle}.
\end{bmatrix}
\]
where $w=u\otimes 1 \pm 1\otimes  u$ and where the Morita equivalence is found by applying Proposition \ref{prop-induced-contexts-for-calssical-matrix-rings} twice using the ring isomorphism $E\cong L[x]/\langle x^{2} - u\rangle$ and the Morita equivalence  $F\sim M_{2}(F)\cong L[y;\rho^{2}]/\langle y^{2}-1\rangle$;  see for example \cite[Theorem 1.3.16]{Jacobsen-division-rings}.  
% This generalised matrix ring can be seen as a quotient of a semilinear path algebra over $L$. 
% Namely, define the quiver $Q$ and the automorphisms $\sigma_{x}\in \Aut(L)$ as follows. 
%  \[
%     \begin{array}{ccc}
% \begin{tikzcd}
%      & 1  \arrow[dr, "c"] &  \\
%  2 \arrow[loop left, "s"]\arrow[ur, "a"] & & 3 \arrow[loop right, "t"]\arrow[ll, "b"]
% \end{tikzcd}
% & \hspace{10mm}
% &
% \begin{tikzcd}
%  & L  \arrow[dr, "\theta^{-1}"] &  \\
%  L \arrow[loop left, "\myid_{L}"]\arrow[ur, "\theta"] & & L \arrow[loop right, "\tau"]\arrow[ll, "\myid_{L}"]
% \end{tikzcd}    \end{array}
%     \]

Hence $\Gamma$ is Morita equivalent to 
% the algebra $L_{\boldsymbol{\sigma}}Q/\langle ab,bc,ca,s^{2}-ue_{2},t^{2}-e_{3}\rangle$, which is 
a \emph{semilinear clannish algebra} in the sense of joint work of the author with Crawley-Boevey \cite{BTCB-semilinear-clannish}. 
In joint work of the author with Labardini-Fragoso \cite{BTLabardini-Fragoso2023semilinear} 
such equivalences have been treated using an alternative and  explicit approach. 
The equivalence above is found in \cite[Table 5]{BTLabardini-Fragoso2023semilinear}. 
% Furthermore, the results there deal with   triangulations of surfaces with orbifold points. 

\subsection{Rings with enough idempotents}
\label{subsec-rings-with-enough-idempotents}

Let $A$ be a ring with a complete  set  $E=\{e_{i}\mid i\in I\}$  of {orthogonal} idempotents $e_{i}=e_{i}^{2}\in A$, so that 
 $e_{i}e_{j}=0$ for $i\neq j$ and $A=\bigoplus_{i\in I}e_{i}A=\bigoplus_{i\in I}Ae_{i}$. 
Then $A$ has local units.  
% Namely, an element of $A$ is a finite sum of those of the form $eae'$ with $e,e'\in E$, and so any  finite subset $S\subseteq A$ involves finitely many elements of $E$, and, thus, a local unit for $S$ is the sum of the idempotents involved.
Partition $I=I_{-}\sqcup I_{+}$ to give a partition $E=E_{-}\sqcup E_{+}$ and hence a Morita context $(A_{-},A_{+};\,M_{-+},M_{+-};\,\varphi_{-},\varphi_{+})$ where 
\[
\begin{array}{ccc}
A_{\pm}=
\bigoplus_{e,e'\in E_{\pm}} e'Ae,
&
M_{\pm\mp}=\bigoplus_{e'\in E_{\pm},e\in E_{\mp}} e'Ae,
&
\varphi_{\pm}(a\otimes b)=ab.
\end{array}
\]
% the maps 
% \[
% \begin{array}{ccc}
% e''Ae'\otimes_{e'Ae'}e'Ae \to e''Ae,
% &
% e''a'e'\otimes e'ae\mapsto e''a'e'ae
% &
% (e''\in E_{i},e'\in E_{k},e\in E_{j},a,a'\in A).
% \end{array}
% \]
By construction we have $A\cong \begin{bsmallmatrix}
    A_{+} & M_{+-}\\
    M_{-+} & A_{-}
\end{bsmallmatrix}$ as rings. 
% For $I$ is finite and $n$ maximal this is a   \emph{Peirce decomposition} of $A$.  
Note also each bimodule $M_{\pm\mp}$ is unital.

Suppose $(A_{\pm},B_{\pm};\,N_{\pm},L_{\pm};\,\theta_{\pm},\zeta_{\pm})$ are Morita contexts where $N_{\pm},L_{\pm}$ are unital. 
Note  $L_{\pm}e'\otimes_{A_{\pm}}eAe'' =0$ and $e''Ae\otimes_{A_{\pm}} e 'N_{\pm} =0$ for any distinct $e,e'\in E_{\pm}$. 
Hence, as tensor products commute over direct sums (see for example \cite[\S 12.4]{Wisbauer-foundations}),  we have 
\[
\begin{array}{cc}
L_{\pm}\otimes_{A_{\pm}}M_{\pm\mp}=\bigoplus_{e\in E_{\pm},e'\in E_{\mp}}L_{\pm}e\otimes_{A_{\pm}}eAe'
,
&
M_{\pm\mp}\otimes_{A_{\mp}}N_{\mp}=\bigoplus_{e\in E_{\pm},e'\in E_{\mp}}eAe'\otimes_{A_{\mp}}e'N_{\mp}.
\end{array}
\] 
Hence we obtain a more explicit description of the column excision and row excision from \S\ref{sec-excision}. 
Note that the column-row ligation and row-column-ligation from \S\ref{sec-ligation} are defined using the multiplication in $A$. 
As in \Cref{cor-main-theorem-about-contexts-restricts-to-equivalences}, when the maps $\theta_{\pm},\zeta_{\pm}$ are all surjective  the classical Morita context from \Cref{thm-induced-contexts-for-generalised-matrix-rings} gives a Morita equivalence \[
A\sim \begin{bmatrix}
    B_{+} & \bigoplus_{e_{\pm}\in E_{\pm}} L_{+}e_{+}\otimes e_{+}Ae_{-}\otimes e_{-}N_{-} \\
    \bigoplus_{e_{\pm}\in E_{\pm}} L_{-}e_{-}\otimes e_{-}Ae_{+}\otimes e_{+}N_{+} & B_{-} 
\end{bmatrix}.
\] 
% and the generalised matrix ring  
% \[
% \begin{array}{cc}
% B=
% \begin{bmatrix}
% B_{1} & \cdots & P_{1n} \\
% \vdots & \ddots & \vdots \\
% P_{n1} & \cdots & B_{n}
% \end{bmatrix}
% ,
% &
% P_{ij}=\bigoplus_{e'\in E_{i},e\in E_{j}} L_{i}e'\otimes_{A_{i}}
% e'Ae
% \otimes_{A_{j}}eN_{j}.
% \end{array}
% \]

\subsection{Functor categories}

We recall the notion of a  \emph{functor ring} following Fuller \cite{Fuller-functor-rings}, using a result due to Gabriel \cite{Gabriel-des-categories-abeliennes-1962}. We then recall a result due to Dung and Garc\'{\i}a \cite{Dung-Garcia-additive-category} relating the first  idea with rings with enough idempotent. 
Such ideas have been combined before; see for example work of Garc\'{\i}a, G\'{o}mez S\'{a}nchez, and Mart\'{\i}nez Hern\'{a}ndez 
\cite[\S1]{Garcia-Gomez-Sachex-Martinez-loc-fin-pres-cats-and-functor-rings}. 
% \cite[Proposition 2.4]{Garcia-Gomez-Sachex-Martinez-loc-fin-pres-cats-and-functor-rings}
Functor rings were also used by Harada \cite{Harada-Perfect-cats-I,Harada-Perfect-cats-II} where they studied \emph{perfect} categories. 
Yamagata  \cite{Yamagata-Morita-duality-1979} also used this idea to give a Morita duality theory for certain functor categories.

% \subsubsection{Functor rings}
% \label{subsec-functor-cats}

 Let $\mathcal{C}$ be a skeletally small additive category and let $\mathcal{A}$ be a chosen skeleton.  
 % meaning a  set of unique representatives of isomorphism classes of objects in $\mathcal{C}$. 
 The \emph{functor ring} of $\mathcal{C}$ (with respect to $\mathcal{A}$) is the direct sum of additive groups
\[
\begin{array}{c}
\Func_{\mathcal{A}}(\mathcal{C})=\bigoplus_{X,Y\in\mathcal{A}}\Hom_{\mathcal{C}}(X,Y).
\end{array}
\]
The multiplication map $\Func_{\mathcal{A}}\times \Func_{\mathcal{A}}\to \Func_{\mathcal{A}}$ is defined as the universal morphism given by the diagram of additive maps $ \Hom_{\mathcal{C}}(Y,Z)\times \Hom_{\mathcal{C}}(X,Y)\to \Hom_{\mathcal{C}}(X,Z)$  given by the composition in $\mathcal{C}$. 
Writing $\myid_{X}$ for the identity on an object $X$, it is straightforward to see that  $\{\myid_{X}\mid X\in \mathcal{A}\}$ defines a complete set $E_{\mathcal{A}}$ of orthogonal idempotents in $\Func_{\mathcal{A}}(\mathcal{C})$. 

By \cite[p. 347, Proposition 2]{Gabriel-des-categories-abeliennes-1962}, if $A=\Func_{\mathcal{A}}(\mathcal{C})$ then there is an equivalence between the category $\lMod{A}$ of unital left $A$-modules and the category $(\mathcal{C},\mathbf{Ab})$ of functors $\mathcal{C}\to \mathbf{Ab}$ where $\mathbf{Ab}$ is the category of abelian groups. 
Following Crawley-Boevey \cite[p. 1642]{Crawley-Boevey-Locally-finitely-presented-additive-categories}, if $\mathcal{C}$  has direct limits then  an object $Z$ is \emph{finitely presented} if $\mathcal{C}(Z,-)$ commutes with direct limits, and  $\mathcal{C}$ is \emph{locally finitely presented} if the finitely presented objects form a skeletally small 
subcategory where every object
in $\mathcal{C}$ is a direct limit of finitely presented ones. 
% Write $\Finpres{\mathcal{C}}$ for the full  
% subcategory of finitely presented objects in $\mathcal{C}$. 
% Then
By  \cite[Theorem 1.1]{Dung-Garcia-additive-category} there a one-to-one correspondence between rings with enough idempotents up to Morita equivalence and locally finitely presented additive categories up to equivalence. 

Hence one can use the discussion so far to reinterpret \Cref{cor-main-theorem-about-contexts-restricts-to-equivalences} in terms of equivalence classes of locally finitely presentable additive categories. 
The author is interested in formalising this interpretation in future work. 

\begin{acknowledgements}
The author is grateful for  support by the Danish National Research Foundation (DNRF156); 
 the Independent Research Fund Denmark (1026-00050B); 
 and the Aarhus University Research Foundation (AUFF-F-2020-7-16). 
The author is also grateful to 
Benjamin Briggs, Peter J{\o}rgensen and Toby Stafford for helpful discussions.
\end{acknowledgements}
\bibliographystyle{bibstyle}
\bibliography{biblio}
\end{document}